\theoremstyle{plain}
\newtheorem{theorem}{Theorem}[section]
\newtheorem{lemma}[theorem]{Lemma}
\newtheorem{corollary}[theorem]{Corollary}
\newtheorem{proposition}[theorem]{Proposition}
\theoremstyle{definition}
\newtheorem{definition}[theorem]{Definition}
\theoremstyle{remark}
\newtheorem{remark}[theorem]{Remark}
\newtheorem{question}[theorem]{Question}
\newtheorem{conjecture}[theorem]{Conjecture}
\def\Nrd{{\rm Nrd}} 
\def\Int{{\rm Int}}
\def\R{{\mathbb{R}}}
\def\Q{{\mathbb{Q}}}
\title[Weak isotropy over totally positive field extensions]{Weak isotropy of central simple algebras with orthogonal involutions over totally positive field extensions}
\subjclass[2020]{12D15, 16W10; 11E10, 11E81, 16K20}
\keywords{Totally postive field extensions, Central Simple Algebras, Orthogonal involutions,Weak isotropy}
\author[P. Mandal]{Priyabrata Mandal}
\address{Department of  Mathematics,  Manipal Institute of Technology, Manipal Academy of Higher Education, Manipal, 576104, India}
\email{p.mandal@manipal.edu}
\author[A. Soman]{Abhay Soman}
\address{School of Mathematics and Statistics, University of Hyderabad, Hyderabad-500046, India}
\email{abhaysoman@uohyd.ac.in}
\date{}
\begin{document}
	
	\begin{abstract}
		In this paper, we explore the behavior of orthogonal involutions in the context of totally positive field extensions. Let $K/F$ be a totally positive extension of formally real fields. By Becher's result, if a quadratic form $q$ over $F$ becomes isotropic over $K$, then $q$ is weakly isotropic over $F$. We present an example in which, despite $K/F$ being totally positive, a central simple algebra $(A, \sigma)$ over $F$ with an orthogonal involution becomes isotropic over $K$, while remaining strongly anisotropic over $F$. However, when $K/F$ is assumed to be a Galois totally positive $2$-extension of formally real fields, we show that an analogue of Becher’s result for quadratic forms holds for orthogonal involutions. Furthermore, for a totally positive Galois field extension $K/F$, we verify Becher’s conjecture for central division algebras of index $2^n$ and exponent $2$ containing a subfield of $F_{\rm py}$ of degree $2^{n-2}$ over $F$.
	\end{abstract}
	\maketitle
	\section {Introduction}
	
	In this article, we assume that all fields are formally real. For basic definitions and notations, we refer the reader to \S\ref{prelim}. Let $F$ be a formally real field and let $F_{\rm py}$ be the Pythagorean closure of $F$ in a fixed algebraic closure of $F$. In \cite{Becher}, Becher defines the notion of a totally positive field extension, viz., an extension of formally real fields $K/F$ is said to be \emph{totally positive} if every semiordering on $F$ can be extended to a semiordering on $K$. Furthermore, he proved that $K/F$ is a totally positive field extension if and only if a quadratic form $q$ defined over $F$ becomes isotropic over $K$, then $q$ is weakly isotropic over $F$. This leads us to ask the following question.
	\begin{question}\label{question-going-down-weak-isotropy}
		Let $K/F$ be a totally positive field extension of formally real fields and let $(A,\sigma)$ be a central simple algebra over $F$ with an orthogonal involution. If $\sigma$ becomes isotropic over $K$, does it follow that $\sigma$ is weakly isotropic over $F$? 
	\end{question}
	We answer this question negatively in Theorem \ref{weakly isotropic involutions under totally positive extensions does not go down}. However, if $K/F$ is assumed to be a finite Galois $2$-extension, then the above Question \ref{question-going-down-weak-isotropy} has an affirmative answer, see Theorem \ref{going down weak isotropy for tp galois 2-extnesions}. More precisely, we obtain the following result.
	\begin{theorem}[= Theorem \ref{going down weak isotropy for tp galois 2-extnesions}]\label{intro-going down weak isotropy for tp galois 2-extnesions}
		Let $K/F$ be a finite Galois $2$-extension of formally real fields that is totally positive. Let $(A, \sigma)$ be a central simple algebra with an orthogonal involution $\sigma$ over $F$. If $\sigma$ is weakly isotopic over $K$, then it is also weakly isotropic over $F$.
	\end{theorem}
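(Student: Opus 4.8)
The plan is to reduce the statement to the case $[K:F]=2$ and there to exploit that a totally positive quadratic extension of formally real fields is obtained by adjoining the square root of a sum of squares. Since $G=\Gal(K/F)$ is a finite $2$-group, there is a chain of subgroups $G=G_0\triangleright G_1\triangleright\cdots\triangleright G_m=\{1\}$ with $[G_i:G_{i+1}]=2$; taking fixed fields yields a tower $F=F_0\subset F_1\subset\cdots\subset F_m=K$ of quadratic extensions, and each $F_i/F$ is totally positive because a restriction of a semiordering is a semiordering. Granting that each \emph{individual step} $F_{i+1}/F_i$ is again totally positive, weak isotropy of $\sigma$ over $F_m=K$ descends one quadratic step at a time, by the quadratic case below, until we reach $F_0=F$.

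Assume now $K=F(\sqrt d)$ is totally positive, $F$ (hence $K$) formally real. Since $d$ is a square in $K$ it lies in every semiordering of $K$, hence in every semiordering of $F$ that extends to $K$; total positivity therefore forces $d$ into every semiordering of $F$, and the intersection of all semiorderings of $F$ is $\sum F^2$, so $d=e_1^2+\cdots+e_s^2$ with $e_i\in F$. Suppose $\sigma_K$ is weakly isotropic over $K$, i.e. $(A,\sigma)^{\perp k}$ is isotropic over $K$ for some $k\ge 1$. Writing $(A,\sigma)$, up to Morita equivalence, as adjoint to a hermitian form $h$ over a division algebra $(D,\theta)$ with $\theta$ of the first kind, the form $g:=k\times h$ over $(D,\theta)$ has scalar extension $g_K$ isotropic over $(D\otimes_F K,\theta\otimes 1)$. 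Apply the Scharlau transfer along $\operatorname{Tr}_{K/F}\colon K\to F$: it carries hermitian forms over $(D\otimes_F K,\theta\otimes 1)$ to hermitian forms over $(D,\theta)$ and visibly preserves isotropy (an isotropic vector for $g_K$ is still isotropic for its transfer), and by Frobenius reciprocity the transfer of $g_K$ is $g\otimes_F\langle 2,2d\rangle$, since the trace form of $F(\sqrt d)/F$ is $\langle 2,2d\rangle$. Hence there are $u,v$ in the underlying $D$-module of $g$, not both zero, with $g(u,u)+d\,g(v,v)=0$.

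Finally, $F$ being central in $D$ gives $g(e_iv,e_iv)=e_i^2\,g(v,v)$, so in the orthogonal sum $(s+1)\times g$ the vector $(u,e_1v,\dots,e_sv)$ satisfies
\[
g(u,u)+\sum_{i=1}^s g(e_iv,e_iv)=g(u,u)+\Bigl(\sum_{i=1}^s e_i^2\Bigr)g(v,v)=g(u,u)+d\,g(v,v)=0 .
\]
This vector is nonzero unless $u=0$ and every $e_iv=0$; but if $u=0$ then $d\,g(v,v)=0$ forces $g(v,v)=0$ with $v\ne 0$, so $g$ is already isotropic. In either case $g$, hence $h$, hence $(A,\sigma)$, is weakly isotropic over $F$, which settles the quadratic case.

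The soft part is the computation just carried out. The crux, and the step I expect to be the main obstacle, is the reduction: showing that inside a totally positive \emph{finite Galois $2$-extension} every intermediate quadratic subextension $F_{i+1}/F_i$ is still totally positive — equivalently, that each adjoined square root is the square root of a sum of squares of the field immediately below it. This is exactly where both hypotheses are indispensable, Theorem \ref{weakly isotropic involutions under totally positive extensions does not go down} showing that for general totally positive extensions the conclusion fails; I would establish it by analysing how the semiorderings of $F_{i+1}$ lying above a fixed semiordering of $F$ are permuted under $\Gal(K/F)$, or by feeding Becher's characterisation of totally positive extensions into the tower directly.
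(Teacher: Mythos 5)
Your overall strategy --- refine $K/F$ into a tower of quadratic extensions using the $2$-group $\Gal(K/F)$ and descend weak isotropy one quadratic step at a time --- is the same as the paper's, and your treatment of the quadratic case is essentially sound: the observation that total positivity of $F(\sqrt d)/F$ forces $d\in\sum F^2$, followed by the transfer/Frobenius-reciprocity computation, amounts to a re-proof of Lemma \ref{weak isotropy holds for quadratic extension}, which the paper simply quotes from \cite{LSU}. However, the step you defer (``granting that each individual step $F_{i+1}/F_i$ is again totally positive'') is precisely the nontrivial content of the theorem, and as written your argument has a genuine gap there. Restriction of semiorderings only shows that the lower pieces $F_i/F$ of the tower are totally positive; it says nothing about the steps $F_{i+1}/F_i$, since for those one must extend an arbitrary semiordering of $F_i$, which need not be the restriction of one of $F$ nor interact in any controlled way with total positivity of $K/F$. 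Theorem \ref{weakly isotropic involutions under totally positive extensions does not go down} is exactly the warning that upper pieces of a totally positive extension can fail to behave, so some genuine use of the Galois hypothesis is indispensable at this point, and your proposal never supplies it; the two directions you sketch (permuting semiorderings under the Galois action, or feeding Becher's criterion into the tower) are not carried out, and the first is delicate precisely because semiorderings, unlike orderings, are not governed by the Galois group in the way such an argument would need.

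The paper supplies the missing ingredient as Proposition \ref{finite Galois tp}: if $K/F$ is a finite Galois totally positive extension and $F\subset E\subset K$, then $K/E$ is again Galois and totally positive (whence every quadratic subextension $F_{i+1}/F_i$ is totally positive, by restriction of semiorderings from $K$). Its proof does not stay at the level of semiorderings: it passes to orderings and real closures, using the results cited there (\cite{BLS} together with \cite[VIII.2.15]{Lam}) to the effect that for a \emph{Galois} extension total positivity can be detected by the minimal polynomial of a primitive element having all its roots in the relevant real closures; since the minimal polynomial over $E$ divides the one over $F$, its roots still lie in $E_{\tilde P}=F_P$, and the Galois hypothesis converts this back into total positivity of $K/E$. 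Once this proposition (or an equivalent statement) is in place, your tower argument closes and coincides in substance with the paper's proof, with your explicit transfer computation playing the role of the citation of \cite{LSU}; without it, the reduction to the quadratic case is unjustified.
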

	In \cite{Becher}, Becher conjectured that if $K/F$ is a totally positive field extension of formally real fields, then the Pythagorean index of a central simple algebra $A$ over $F$ of exponent $2$ is the same as the Pythagorean index of $A\otimes_F K$. We verify the conjecture in a special case of central division algebras containing subfields of $F_{\rm py}$ of certain degrees. More precisely, we have the following.
	\begin{proposition}[= Proposition \ref{becher conjecture particular case}]\label{intro-becher conjecture particular case}
		Let $F$ be a formally real field, and let $K/F$ a Galois totally positive field extension. Suppose $D$ is a central division algebra of the index $2^n$ and exponent $2$ that contains a subfield $L\subset F_{\rm py}$ with $[L:F]=2^{n-2}$. Then, $pind(D)=pind(D\otimes_FK)$, i.e., Becher's conjecture holds for such central division algebras.
	\end{proposition}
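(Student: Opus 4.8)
Here is how I would prove Proposition~\ref{intro-becher conjecture particular case} (= Proposition~\ref{becher conjecture particular case}).

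The plan is to reduce the proposition, via the double centralizer theorem, to a statement about a single biquaternion algebra and its Albert form over a Pythagorean field, and then to settle that statement by combining Becher's criterion for totally positive extensions with two auxiliary facts which I would first isolate as lemmas: that the Pythagorean closure of a formally real field is always a totally positive extension, and that over a Pythagorean field a quadratic form is anisotropic if and only if it is strongly anisotropic. For the reductions, recall $pind(A)=\operatorname{ind}(A\otimes_FF_{\rm py})$, and fix an algebraic closure so that $F_{\rm py}\subseteq K_{\rm py}$; then $pind(D\otimes_FK)=\operatorname{ind}(D\otimes_FK_{\rm py})$ divides $pind(D)$, so it suffices to prove the reverse divisibility. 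By the double centralizer theorem the centralizer $B:=C_D(L)$ is a central division $L$-algebra of degree $4$ and exponent $2$ --- a biquaternion division algebra over the formally real field $L$ --- and $D\otimes_FL$ is Brauer-equivalent to $B$. Since $L\subseteq F_{\rm py}$ we have $F_{\rm py}=L_{\rm py}$; setting $P:=F_{\rm py}=L_{\rm py}$ (a Pythagorean formally real field), $B':=B\otimes_LP$, and $P':=K_{\rm py}$, which equals $(PK)_{\rm py}$, the tower $F\subseteq L\subseteq F_{\rm py}\subseteq E$ gives $D\otimes_FE\sim B\otimes_LE$ for every $E\supseteq F_{\rm py}$; taking $E=F_{\rm py}$ and $E=K_{\rm py}$ yields $pind(D)=\operatorname{ind}(B')$ and $pind(D\otimes_FK)=\operatorname{ind}(B'\otimes_PP')$. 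Moreover $PK/P$ is a base change of the Galois totally positive extension $K/F$, hence itself Galois and totally positive, so $P'/P$ --- a composite of the totally positive extensions $PK/P$ and $(PK)_{\rm py}/PK$ --- is totally positive. Thus everything reduces to proving $\operatorname{ind}(B')=\operatorname{ind}(B'\otimes_PP')$ for a biquaternion algebra $B'$ over a Pythagorean formally real field $P$ and a totally positive extension $P'/P$, the divisibility $\operatorname{ind}(B'\otimes_PP')\mid\operatorname{ind}(B')$ being automatic.

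For the remaining statement I would fix an Albert form $\phi$ of $B'$ over $P$ --- a $6$-dimensional quadratic form of determinant $\equiv-1$ --- for which $B'\otimes_PE$ is split, of index $\le2$, or of index $4$ according as $\phi\otimes_PE$ is hyperbolic, isotropic, or anisotropic, for every $E\supseteq P$; then I would distinguish the three cases $\operatorname{ind}(B')\in\{1,2,4\}$. The case $\operatorname{ind}(B')=1$ is trivial. If $\operatorname{ind}(B')=2$, then $\phi$ is not hyperbolic over $P$; since $P$ is Pythagorean the Witt ring $W(P)$ is reduced, so $\phi$ has nonzero signature at some ordering $Q_0$ of $P$. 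A totally positive extension lifts orderings: $-1$ cannot be a sum $\sum q_is_i$ with $q_i\in Q_0$ and $s_i$ a square of $P'$ (otherwise a semiordering of $P'$ extending $Q_0$ would contain $-1$), so the preordering of $P'$ generated by $Q_0$ is proper and lies in an ordering $Q_0'$ of $P'$ restricting to $Q_0$, along which the signature of $\phi$ is unchanged. Hence $\phi\otimes_PP'$ is not hyperbolic, $B'\otimes_PP'$ is not split, and its index, which divides $2$, equals $2$.

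The essential case is $\operatorname{ind}(B')=4$, that is, $\phi$ anisotropic over $P$. Over the Pythagorean field $P$ a quadratic form is isotropic as soon as it is weakly isotropic --- if $m\times q$ is isotropic then, collapsing each sum of squares occurring in the isotropy relation into a single square, one produces a nontrivial zero of $q$ --- so $\phi$ is in fact strongly anisotropic over $P$; Becher's criterion applied to the totally positive extension $P'/P$ then shows $\phi\otimes_PP'$ is not weakly isotropic, in particular anisotropic, so $\operatorname{ind}(B'\otimes_PP')=4$. The two facts I would prove first as lemmas are: (i) for any formally real field $N$, the Pythagorean closure $N_{\rm py}/N$ is totally positive --- by Becher's criterion this amounts to showing a form isotropic over $N_{\rm py}$ is weakly isotropic over $N$, which I would establish one quadratic step $N'(\sqrt{a^2+b^2})/N'$ at a time, using that $\langle1,-(a^2+b^2)\rangle$ is weakly isotropic together with the standard description of the forms that become isotropic over a quadratic extension; and (ii) over a Pythagorean field, anisotropic $=$ strongly anisotropic.

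I expect the index-$4$ case to be the main obstacle. Applying Becher's criterion to $\phi$ directly does not work: an Albert form, having determinant $\equiv-1$, is never positive definite at an ordering and hence sits on the boundary of weak isotropy, and weak isotropy alone does not prevent a form from becoming isotropic over a totally positive extension. What makes the argument go through is precisely Lemma (ii) --- over the Pythagorean field $P$ the genuine anisotropy of $\phi$ coincides with strong anisotropy, which is the property that is inherited by totally positive extensions. A secondary technical point is the base-change stability of total positivity needed to pass from $K/F$ to $PK/P$, and it is there that the Galois hypothesis on $K/F$ enters.
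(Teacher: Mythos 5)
Your proposal is correct, and its opening move---the double centralizer reduction to the biquaternion centralizer $C_D(L)$, together with $L_{\rm py}=F_{\rm py}$ and the Brauer equivalence $D\otimes_FE\sim C_D(L)\otimes_LE$---is exactly the paper's; but from there the two arguments diverge. The paper stays over $L$: it notes $[C_D(L):L]=2^4$, argues (via the proof of Proposition \ref{finite Galois tp}) that $KL/L$ is Galois and totally positive, and then simply invokes \cite[Theorem 1.5]{MPS}, which supplies the statement $\operatorname{pind}(C_D(L))=\operatorname{pind}(C_D(L)\otimes_LKL)$ for that extension; translating back through $\operatorname{pind}(D)=\operatorname{pind}(C_D(L))$ finishes the proof in a few lines. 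You instead push everything up to $P=F_{\rm py}$ and in effect reprove the biquaternion case of Becher's conjecture from scratch: the Albert-form trichotomy for $\operatorname{ind}(B')\in\{1,2,4\}$, the signature argument over the reduced Witt ring of $P$ in the index-$2$ case, and in the index-$4$ case the key combination of ``weakly isotropic $=$ isotropic over a Pythagorean field'' with Becher's criterion (Lemma \ref{equivalent characterization of totally positive field extensions}) applied to $K_{\rm py}/P$; this is more self-contained (no appeal to the external MPS theorem) at the cost of extra bookkeeping, notably your Lemma (i) that $N_{\rm py}/N$ is always totally positive, which the paper's route never needs. One step you should not present as formal: ``$PK/P$ is a base change of $K/F$, hence totally positive.'' Total positivity is not obviously stable under base change; what makes it work is the Galois/real-closure argument (every ordering of $F_{\rm py}$ restricts to an ordering $Q$ of $F$, total positivity plus normality put $K$ inside each real closure $F_Q$, hence inside the real closure of $F_{\rm py}$ at the extended ordering, so every ordering of $P$ extends to $PK$, and for Galois extensions full extension of orderings yields total positivity)---this is precisely what the paper extracts from the proof of Proposition \ref{finite Galois tp} for $KL/L$, and it deserves to be written out as a lemma; you correctly flagged that the Galois hypothesis enters here, and your Lemmas (i) and (ii) are true and provable essentially as you sketch them.
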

	
	Recall that a weakly isotropic quadratic form over a Pythagorean field is necessarily isotropic. However, we show that the analogue of this result does not hold for central simple algebras with orthogonal involutions over Pythagorean fields (see Proposition \ref{weak isotropy does not imply isotropy for involution}). In a similar spirit, we construct an example of a Pythagorean field $E$ and a quaternion central division algebra with an orthogonal involution over $E$ where the sum of hermitian squares fails to be a hermitian square (see Proposition \ref{sum of hermitian squares need not be a hermitian square over Pythagorean field}).
	
	\section{Preliminaries}\label{prelim}
	
	In this section, we review some fundamental definitions and concepts. For a more detailed explanation, we refer the reader to the relevant sections in \cite{Lam, KMRT}.
	
	A field $F$ is said to be \textit{formally real} if $-1$ cannot be expressed as a sum of squares in $F$. A field $F$ is called \textit{Pythagorean} if every sum of squares in $F$ is itself a square. The {\it Pythagorean closure} of $F$,  denoted by $F_{\rm py}$, is the smallest subfield of $F_{\rm al}$ containing $F$ that is Pythagorean. Next, we recall the definition of an ordering on a field. 
	We denote by $F^*$ the set of nonzero elements of $F$.
	Throughout, by a \emph{quadratic form} we mean a \emph{nondegenerate} quadratic form.
	\begin{definition}
		An ordering $P$ on a field $F$ is a proper subset $P \subsetneq F$ satisfying the following conditions:  
		$F^2 \subseteq P$,  $P + P \subseteq P$,  $P \cdot P \subseteq P$,  $P \cup -P = F$,  and  $P \cap -P = \{0\}$.  
		
	\end{definition}
	\medskip 
	
	According to (\cite[Theorem $1.10$, Chapter $8$]{Lam}), a field $F$ is formally real if and only if it has at least one ordering.  
	Let $X_F$ denote the set of all orderings of $F$. Let $F_{\rm al}$ be an algebraic closure of $F$.  
	For any ordering $P \in X_F$, there exists a real closure $F_P$ of $(F, P)$ in $F_{\rm al}$, which is unique up to isomorphism (see \cite[Theorem $2.8$, Chapter $8$]{Lam}).  
	
	\medskip 
	
	\begin{definition} (see \cite[page $5$]{Prestel}) 
		\label{semiorder}
		A semiordering on a field $F$ is a subset $S\subset F$ satisfying the following conditions: $1\in S$, $F^2S\subset S$, $S+S\subset S$, $S\cup -S=F$, and $S\cap -S=\{0\}$.
	\end{definition}
	\medskip 
	
	\begin{definition}[Totally positive field extension]\label{def-totally-positive}
		A field extension $K/F$ of formally real fields is said to be \emph{totally positive} if every semiordering on $F$ extends to a semiordering on $K$.
	\end{definition}
	Becher provides the following useful characterization of total positiveness using quadratic forms.
	\begin{lemma}\cite[Lemma 3.1]{Becher}\label{equivalent characterization of totally positive field extensions}
		Let $K/F$ be a field extension of formally real fields. The following statements are equivalent.
		\begin{enumerate}
			\item $K/F$ is totally positive.
			\item If a quadratic form $q$ defined over $F$ is isotropic over $K$, then $q$ is weakly isotropic over $F$.
		\end{enumerate}
	\end{lemma}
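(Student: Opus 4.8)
The plan is to prove the two implications separately, using two facts about semiorderings. The elementary one: if $S$ is a semiordering of a field $E$ and $a_1,\dots,a_m\in S\setminus\{0\}$, then the diagonal form $\langle a_1,\dots,a_m\rangle$ is anisotropic over $E$, since a nontrivial zero $\sum_i a_ix_i^2=0$ with, say, $x_j\neq 0$ would force $a_jx_j^2=-\sum_{i\neq j}a_ix_i^2$ to lie in $(S\setminus\{0\})\cap(-S)=\emptyset$; as any finite orthogonal sum $\langle a_1,\dots,a_m\rangle\perp\dots\perp\langle a_1,\dots,a_m\rangle$ is again a form of this shape, such a form is in fact \emph{strongly anisotropic} over $E$. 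The substantive input is Prestel's theorem (see \cite{Prestel}): a strongly anisotropic form over $F$ is, up to replacement by its negative, positive definite with respect to some semiordering $S$ of $F$, i.e.\ isometric to $\langle a_1,\dots,a_m\rangle$ with all $a_i\in S\setminus\{0\}$; its converse is the preceding remark. I will also use that if $S'$ is a semiordering of $K$ with $S\subseteq S'$ for a semiordering $S$ of $F$, then $S'\cap F=S$: indeed if $a\in(S'\cap F)\setminus S$ then $-a\in S\subseteq S'$, so $a\in S'\cap(-S')=\{0\}$. Thus ``$S$ extends to $K$'' is equivalent to ``$S$ is contained in some semiordering of $K$''.

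For $(1)\Rightarrow(2)$, let $K/F$ be totally positive and let $q$ be a quadratic form over $F$ that becomes isotropic over $K$; suppose, for contradiction, that $q$ is not weakly isotropic over $F$. Then $q$ is strongly anisotropic over $F$, so by Prestel's theorem there is a semiordering $S$ of $F$ and, up to sign, a diagonalization $q\cong\langle a_1,\dots,a_m\rangle$ with all $a_i\in S\setminus\{0\}$ (the sign ambiguity is harmless, isotropy over $K$ being insensitive to it). By total positivity $S$ extends to a semiordering $S'$ of $K$, and since $a_i\in S\setminus\{0\}\subseteq S'\setminus\{0\}$, the elementary fact applied over $K$ shows that $\langle a_1,\dots,a_m\rangle$, hence $q$, is anisotropic over $K$ --- a contradiction. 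Therefore $q$ is weakly isotropic over $F$.

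For $(2)\Rightarrow(1)$, assume $(2)$ and let $S$ be a semiordering of $F$; I must exhibit a semiordering of $K$ containing $S$. Put $M:=\bigl\{\sum_{i=1}^m a_ik_i^2 : m\ge 0,\ a_i\in S,\ k_i\in K\bigr\}\subseteq K$, which contains $S$ (take all $k_i=1$) as well as $K^2$, is closed under addition, and is closed under multiplication by squares of $K$. I first claim $M\cap(-M)=\{0\}$. If $0\neq t\in M\cap(-M)$, write $t=\sum_i a_ik_i^2$ and $-t=\sum_j b_jl_j^2$ with $a_i,b_j\in S$; adding and discarding the terms with vanishing coefficient yields a relation over $K$ all of whose coefficients lie in $S\setminus\{0\}$, and since $\sum_i a_ik_i^2=t\neq 0$ at least one of the retained terms has a nonzero variable, so the relation is a nontrivial zero. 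The associated diagonal form over $F$ has all entries in $S\setminus\{0\}$, so it is strongly anisotropic over $F$ by the elementary fact, in particular not weakly isotropic over $F$; yet it is isotropic over $K$ --- contradicting $(2)$. Hence $M\cap(-M)=\{0\}$, and (since $1=1\cdot 1^2\in M$) also $-1\notin M$. Extending $M$ to a semiordering $S'$ of $K$ and applying the restriction remark, $S'\cap F=S$, so $S$ extends to $K$; as $S$ was arbitrary, $K/F$ is totally positive.

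The step I expect to require the most care is this last one: upgrading the ``$S$-cone'' $M$ of $K$ --- additively closed, closed under multiplication by $K^2$, containing $S$, with $M\cap(-M)=\{0\}$ --- to a genuine semiordering $S'\supseteq M$ of $K$. For orderings this is the classical Zorn's lemma argument that a maximal proper preordering is an ordering, but since semiorderings are not multiplicatively closed, the analogous statement --- that a suitably maximal such $M$ already satisfies $M\cup(-M)=K$ --- is genuinely more delicate, and here I would rely on Prestel's treatment of semiorderings in \cite{Prestel}. The other nonelementary ingredient, Prestel's characterization of strong anisotropy via positive definiteness at a semiordering, enters only in the direction $(1)\Rightarrow(2)$.
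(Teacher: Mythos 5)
The paper does not prove this lemma at all --- it is quoted with a citation to Becher --- so there is no internal argument to compare against; judged on its own, your proof is correct and is essentially the standard route (and, in substance, the original one): the elementary observation that a diagonal form with entries in $S\setminus\{0\}$ is strongly anisotropic, Prestel's local--global principle for weak isotropy in the direction $(1)\Rightarrow(2)$, and for $(2)\Rightarrow(1)$ the quadratic module $M=\{\sum a_ik_i^2: a_i\in S,\ k_i\in K\}$, whose properness is exactly what hypothesis $(2)$ guarantees. The one step you flag as delicate does go through by the standard Zorn argument from Prestel's theory, and it is worth recording why: take $S'$ maximal among subsets of $K$ containing $M$ that are closed under addition and under multiplication by $K^2$ and satisfy $-1\notin S'$ (unions of chains preserve these conditions). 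If $x\notin S'$, then by maximality $-1=m+x\sigma$ with $m\in S'$ and $\sigma$ a nonzero sum of squares, so $-x=(1+m)\sigma\cdot(\sigma^{-1})^2\in S'$; hence $S'\cup -S'=K$. And if $0\neq a\in S'\cap -S'$, then since every $y\in K$ is a difference of two squares, $ya=a\bigl(\tfrac{y+1}{2}\bigr)^2+(-a)\bigl(\tfrac{y-1}{2}\bigr)^2\in S'$, forcing $S'=K\ni -1$, a contradiction; hence $S'\cap -S'=\{0\}$ and $S'$ is a semiordering of $K$ containing $S$, which with your restriction remark gives $S'\cap F=S$. With that step secured, the only remaining input is Prestel's theorem that a strongly anisotropic form over $F$ is (up to sign) positive definite with respect to some semiordering, which you attribute correctly and which is precisely the reason semiorderings enter Becher's definition of total positivity; so your write-up is a complete and faithful reconstruction of the cited result.
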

	Next, we recall the Effective Diagonalization Property from \cite{Ware-Nagoya}.
	\begin{definition}
		A quadratic form $\langle a_1, \dots, a_n \rangle$ is \textit{effectively diagonalizable} if it is isometric to a form $\langle b_1, \dots, b_n \rangle$ satisfying $b_i \in P \implies b_{i+1} \in P$ for all $1 \leq i < n$ and all $P \in X_F$. The field $F$ satisfies the \textit{Effective Diagonalization Property} (or is \textit{ED}, for short) if every quadratic form over $F$ is effectively diagonalizable.
	\end{definition}
	
	Next we recall the definition of an involution.
	\begin{definition} \cite[\S 2]{KMRT} \label{involution definition}
		An \textit{involution} on a central simple algebra $A$ over a field $F$ is a map $\sigma: A \to A$ that satisfies the following conditions:
		\begin{enumerate}
			\item $\sigma(x + y) = \sigma(x) + \sigma(y)$ for all $x, y \in A$,
			\item $\sigma(\lambda x) = \lambda \sigma(x)$ for all $\lambda \in F$ and $x \in A$,
			\item $\sigma(xy) = \sigma(y)\sigma(x)$ for all $x, y \in A$,
			\item $\sigma(\sigma(x)) = x$ for all $x \in A$.
		\end{enumerate}
		
	\end{definition}
	\medskip 
	
	From Definition \ref{involution definition}, we see that the center $F$ is preserved under the map $\sigma$, i.e., $\sigma(F) = F$.  
	Therefore, the restriction of $\sigma$ to $F$ is either the identity map or an automorphism of order $2$.  
	Involutions that leave the center elementwise invariant are called \textit{involutions of the first kind}.  
	If $K/F$ is a field extension and $A$ is a central simple algebra over $F$ with an involution $\sigma$, then we denote by $\sigma_K$ the involution $\sigma \otimes 1_K$ on the central simple algebra $A \otimes_F K$ over $K$.  
	
	Let $A$ be a central simple algebra over $F$. Consider the case when $A$ splits, i.e., $A \cong \text{End}_F(V)$ for some vector space $V$ over $F$.  
	Let $b$ be a non-singular bilinear form on $V$. Then, for any $f \in \text{End}_F(V)$, one can define a map  
	$\sigma_b : \text{End}_F(V) \to \text{End}_F(V)$ satisfying the following condition:  
	
	\[
	b(x, f(y)) = b(\sigma_b(f)(x), y), \quad \text{for all } x, y \in V.
	\]

	The map $\sigma_b$ is called the \textit{adjoint involution} with respect to the non-singular bilinear form $b$.  
	The $F$-linear involutions on $\text{End}_F(V)$ are precisely the adjoint involutions with respect to symmetric or skew-symmetric non-singular bilinear forms on $V$ (see \cite[Chapter 1]{KMRT}).

	\begin{definition} \cite[\S 2]{KMRT}
		An involution $\sigma$ of the first kind is said to be of \textit{orthogonal type} (or simply \textit{orthogonal}) if for any splitting field $L$ and any isomorphism $(A_L, \sigma_L) \simeq (\text{End}_L(V), \sigma_b)$, the bilinear form $b$ is symmetric.
		
	\end{definition}
	
	\begin{definition}
		Let $(A, \sigma)$ be a central simple algebra with an orthogonal involution $\sigma$ over a field $F$.  
		The involution $\sigma$ is said to be \emph{isotropic} if there exists a nonzero $x \in A$ such that $\sigma(x)x = 0$.  
		The involution is said to be \emph{weakly isotropic} if there exist nonzero elements $x_1, x_2, \dots, x_n \in A$ such that  
		\[
		\sum_{i=1}^{n} \sigma(x_i) x_i = 0.
		\]
		If $\sigma$ is not weakly isotropic, then we say that $\sigma$ is \emph{strongly anisotropic}.  
		In other words, $\sigma$ is \emph{strongly anisotropic}, if for any $n \in \mathbb{N}$, and any nonzero elements $x_1, x_2, \dots, x_n \in A$,  
		\[
		\sum_{i=1}^{n} \sigma(x_i) x_i \neq 0.
		\]
	\end{definition}
	
	A \textit{generalized quaternion algebra} $Q = (a, b)_F$ is an $F$-algebra generated by elements $i$ and $j$ satisfying the relations  
	\[
	i^2 = a, \quad j^2 = b, \quad \text{and} \quad ij = -ji,
	\]
	where $\{1, i, j, k = ij\}$ forms a basis for $Q$ over $F$. This is a central simple algebra over $F$ of dimension $4$, and it has a canonical involution $\gamma$ on $Q$ defined by  
	\[
	\gamma(x_0 + x_1 i + x_2 j + x_3 k) = x_0 - x_1 i - x_2 j - x_3 k,  
	\]
	for $x_0, x_1, x_2, x_3 \in F$. When $F = \mathbb{R}$ and $a = b = -1$, we obtain the Hamiltonian algebra $\mathbb{H}$ over $\mathbb{R}$.  
	
	\section{Main results}
	\subsection{Algebras with involutions}
	
	We begin with the following proposition, which follows similar lines as the proof of \cite[Proposition 3.9]{MPS}.  
	We include it here for completeness.  
	
	\begin{proposition}\label{finite Galois tp}
		Let $K/F$ be a totally positive finite Galois extension of formally real fields and $E$ be a field such that $F \subset E \subset K$. Then $K/E$ is also a totally positive finite Galois extension. 
	\end{proposition}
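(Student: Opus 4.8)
The plan is as follows. The Galois part is automatic: since $K/F$ is Galois, $K/E$ is Galois for every intermediate field $E$, and $[K:E]$ divides $[K:F]<\infty$, so $K/E$ is a finite Galois extension and only its total positivity requires an argument. By Lemma~\ref{equivalent characterization of totally positive field extensions}, applied to the extension $K/E$, it suffices to prove: if a quadratic form $q$ over $E$ becomes isotropic over $K$, then $q$ is weakly isotropic over $E$. Fix such a $q$ and set $G=\Gal(K/F)$, $H=\Gal(K/E)$.

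The first step is to transport the problem down to $F$. Since $K/F$ is normal and $E\subseteq K$, every $F$-embedding of $E$ into a fixed algebraic closure lands inside $K$, so $E\otimes_F K\cong\prod_{gH\in G/H}K$ via $e\otimes x\mapsto(g(e)x)_{gH}$. Pick any nonzero $F$-linear functional $t\colon E\to F$ and let $t_*q$ be the corresponding Scharlau transfer, a quadratic form over $F$. Transfer commutes with base change, and under the displayed isomorphism the extended functional $t\otimes 1_K\colon\prod_{gH}K\to K$ has the form $(x_{gH})_{gH}\mapsto\sum_{gH}\mu_{gH}x_{gH}$ for suitable $\mu_{gH}\in K$, not all zero (as $t\neq0$). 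Hence $(t_*q)\otimes_F K$ is an orthogonal sum, over $gH\in G/H$, of the forms obtained by transferring ${}^{g}q_K$ along multiplication by $\mu_{gH}$, where ${}^{g}q$ denotes the form whose entries are the $g$-images of those of $q$. Each ${}^{g}q_K$ is isotropic, because applying the automorphism $g^{-1}$ of $K$ to an isotropy relation for $q\otimes_E K$ yields one for ${}^{g}q_K$; and since some $\mu_{gH}\neq0$, the form $(t_*q)\otimes_F K$ contains an isotropic orthogonal summand (a nonzero scalar multiple of an isotropic form), so it is itself isotropic. As $K/F$ is totally positive, Lemma~\ref{equivalent characterization of totally positive field extensions} gives that $t_*q$ is weakly isotropic over $F$.

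The remaining step is to pull the conclusion back up, i.e. to deduce that $q$ is weakly isotropic over $E$; this is the technical core. Assume, for a contradiction, that $q$ is strongly anisotropic over $E$. Then there is a semiordering $S$ on $E$ with respect to which $q$ is positive definite, that is $q\simeq\langle a_1,\dots,a_m\rangle$ with all $a_i\in S\setminus\{0\}$ (this characterization of strong anisotropy via semiorderings goes back to Prestel, see \cite{Prestel}), and $S\cap F$ is a semiordering of $F$. For a vector $v\neq0$ one has $q(v)=\sum_i a_iv_i^2\in S\setminus\{0\}$, and the value of the transfer at $v$ is $(t_*q)(v)=t\bigl(q(v)\bigr)$. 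Thus, if $t$ is chosen so that $t\bigl(S\setminus\{0\}\bigr)\subseteq(S\cap F)\setminus\{0\}$, then $t_*q$ takes all of its nonzero values in $(S\cap F)\setminus\{0\}$, so $t_*q$ is positive definite with respect to the semiordering $S\cap F$ and hence strongly anisotropic over $F$ — contradicting the previous paragraph, which applies verbatim to this particular $t$. Therefore no such $S$ exists and $q$ is weakly isotropic over $E$.

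I expect the main obstacle to be precisely the construction, in the last step, of an $F$-linear functional $t\colon E\to F$ carrying $S\setminus\{0\}$ into $(S\cap F)\setminus\{0\}$ (or at least carrying the value set of $q$ there); an unadapted choice such as the trace $\mathrm{Tr}_{E/F}$ is useless, since $\mathrm{Tr}_{E/F,\,*}\langle a\rangle$ may be metabolic and thereby destroy all positivity information. This is the point at which the finiteness and the Galois hypothesis on $K/F$ genuinely enter, and it is handled by the semiordering–transfer arguments used in the proof of \cite[Proposition 3.9]{MPS}.
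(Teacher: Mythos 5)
Your reduction via Lemma \ref{equivalent characterization of totally positive field extensions}, and the transfer/base-change computation showing that for \emph{every} nonzero $F$-linear $t\colon E\to F$ the form $(t_*q)\otimes_F K$ is isotropic and hence $t_*q$ is weakly isotropic over $F$, are fine. But the step you yourself flag as the main obstacle is a genuine gap, and in fact the functional you ask for does not exist. The strong requirement $t(S\setminus\{0\})\subseteq(S\cap F)\setminus\{0\}$ is impossible as soon as $[E:F]\ge 2$: $\ker t$ is a nonzero $F$-subspace of $E$, and since $S\cup -S=E$ it contains a nonzero element of $S$, which $t$ sends to $0$. The weakened requirement (positivity of $t$ on the nonzero value set of $q$) can also fail: take $F=\Q$, $E=\Q(\sqrt2)$, $q=\langle 1,\sqrt2\rangle$. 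This $q$ is strongly anisotropic over $E$ (it is definite at the ordering with $\sqrt2>0$), and since every positive rational is a sum of squares, the only semiordering of $\Q$ is $\Q_{\ge 0}$; so, by the very local--global principle you invoke, strong anisotropy of $t_*q$ over $\Q$ would force $t$ to be strictly positive (or strictly negative) on all nonzero values $\alpha^2+\sqrt2\,\beta^2$ of $q$. Under the two real embeddings of $E$ these values are dense in the closed half-plane $[0,\infty)\times\R$, so such a $t$ would have to be a nonnegative multiple of one embedding, which is not $\Q$-valued; hence $t_*q$ is weakly isotropic over $\Q$ for every nonzero $t$, although $q$ is strongly anisotropic over $E$. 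The upshot is that strong anisotropy of $q$ over $E$ alone cannot be transported to strong anisotropy of some Scharlau transfer over $F$: any correct completion of your scheme must make the isotropy of $q$ over $K$ (and the Galois, total-positivity hypotheses) enter the choice or analysis of $t$, and as written it does not. The appeal to ``the semiordering--transfer arguments in the proof of \cite[Proposition 3.9]{MPS}'' does not close this, since no transfer argument of this kind appears there, nor in the paper's own proof.

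For comparison, the paper's argument avoids transfers entirely and is much shorter: write $K=F(\theta)$; by \cite{BLS} and \cite[VIII.2.15]{Lam}, total positivity of the Galois extension $K/F$ forces the minimal polynomial of $\theta$ over $F$ to have all its roots in a real closure $F_P$; the minimal polynomial of $\theta$ over $E$ divides it, an ordering $P$ of $F$ extends to an ordering $\tilde P$ of $E$ (restricting semiorderings of $K$ shows $E/F$ is totally positive), and $E_{\tilde P}=F_P$ since $E/F$ is algebraic; the same characterization applied to the Galois extension $K/E$ then yields that $K/E$ is totally positive.
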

	\begin{proof}
		Let $K = F(\theta)$, and let $f(t) \in F[t]$ be the minimal polynomial of $\theta$ over $F$.  
		By \cite[3.5]{BLS} and \cite[VIII.2.15]{Lam}, the polynomial $f(t)$ has exactly $[K:F]$ real roots,  
		i.e., all the roots of $f(t)$ lie in a real closure $F_P$ of $F$.  
		
		Let $g(t)$ be the minimal polynomial of $\theta$ over $E$. Since  
		$g(t)$ divides $f(t)$, all the roots of $g(t)$ also lie in $F_P$.  
		Since $E/F$ is totally positive, there exists an ordering $\tilde{P} \in X_E$  
		such that $\tilde{P}$ extends $P$. As $E/F$ is algebraic, we have  
		$E_{\tilde{P}} = F_P$. Hence, all the roots of $g(t)$ lie in $E_{\tilde{P}}$.  
		Furthermore, since $K/E$ is Galois, it is totally positive by  
		\cite[VIII.2.15]{Lam} and \cite[3.9]{BLS}.  
		
	\end{proof}
	
	We recall the following result from \cite{LSU}.
	
	\begin{lemma}[\cite{LSU}]\label{weak isotropy holds for quadratic extension} Let $A$ be a central simple $F$-algebra with an orthogonal involution $\sigma$ of the first kind. Let $K=F(\sqrt{d})$, where $d$ is a sum of squares in $F$. If $\sigma_K$ is weakly isotropic, then $\sigma$ is weakly isotropic.
	\end{lemma}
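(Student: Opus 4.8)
The plan is to argue directly with the decomposition of $A_K := A\otimes_F K$ over $A$. First I would dispose of the trivial case: if $d$ is a square in $F$ then $K=F$ and there is nothing to prove, so assume $[K:F]=2$, fix the $F$-basis $\{1,\sqrt d\}$ of $K$, and write each element of $A_K = A \oplus A\sqrt d$ uniquely as $a+b\sqrt d$ with $a,b\in A$. Since $\sigma_K=\sigma\otimes 1_K$ restricts to the identity on the centre $K$, we have $\sigma_K(a+b\sqrt d)=\sigma(a)+\sigma(b)\sqrt d$, and as $(\sqrt d)^2=d$ is central,
\[
\sigma_K(a+b\sqrt d)\,(a+b\sqrt d)=\bigl(\sigma(a)a+d\,\sigma(b)b\bigr)+\bigl(\sigma(a)b+\sigma(b)a\bigr)\sqrt d .
\]

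Next, suppose $\sigma_K$ is weakly isotropic, witnessed by nonzero $y_1,\dots,y_n\in A_K$ with $\sum_i\sigma_K(y_i)y_i=0$, and write $y_i=a_i+b_i\sqrt d$. Reading off the $A$-component (the coefficient of $1$ in the decomposition $A_K=A\oplus A\sqrt d$) of the sum over $i$ of the displayed identity gives the single equation in $A$
\[
\sum_{i=1}^n\sigma(a_i)a_i+d\sum_{i=1}^n\sigma(b_i)b_i=0 .
\]
Now I would invoke the hypothesis that $d$ is a sum of squares: write $d=s_1^2+\cdots+s_r^2$ with $s_j\in F^*$, discarding any zero terms (note $d\neq 0$, since otherwise $K=F$, so $r\ge 1$). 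As each $s_j$ is central and $\sigma$-fixed, $d\,\sigma(b_i)b_i=\sum_j\sigma(s_jb_i)(s_jb_i)$, so the equation becomes
\[
\sum_{i=1}^n\sigma(a_i)a_i+\sum_{i=1}^n\sum_{j=1}^r\sigma(s_jb_i)(s_jb_i)=0 ,
\]
i.e.\ the $\sigma$-squares of the elements $a_1,\dots,a_n,\,s_1b_1,\dots,s_rb_n$ of $A$ sum to zero.

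Finally I would check that this family, after deleting its zero members, is nonempty: if every $a_i=0$ and every $s_jb_i=0$, then $s_1\neq 0$ forces every $b_i=0$, whence all $y_i=0$, contrary to assumption. Thus some nonzero element occurs, and we have produced finitely many nonzero elements of $A$ whose $\sigma$-squares sum to zero; that is, $\sigma$ is weakly isotropic over $F$. The argument uses neither orthogonality of $\sigma$ nor formal reality of the fields — only that $\sigma$ is an involution of the first kind and that $d$ is a sum of squares in $F$. I do not expect a substantial obstacle here; the only points needing a little care are that $\sigma_K$ fixes $\sqrt d$ (because it is the identity on the centre $K$) and the nonemptiness bookkeeping in the last step, which is where the hypothesis $d\neq 0$ (equivalently, $d$ a \emph{nonempty} sum of squares) is actually used.
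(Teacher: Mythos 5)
Your argument is correct. The paper itself gives no proof of this lemma --- it is imported verbatim from \cite{LSU} --- so what you have produced is a self-contained elementary justification of a quoted result rather than a variant of an argument in the text. Your route is the natural direct one: write $A\otimes_F K=A\oplus A\sqrt d$, observe that $\sigma_K$ fixes $\sqrt d$ (being the identity on the centre $K$), project the relation $\sum_i\sigma_K(y_i)y_i=0$ onto the $A$-component to get $\sum_i\sigma(a_i)a_i+d\sum_i\sigma(b_i)b_i=0$, and then absorb $d=s_1^2+\cdots+s_r^2$ into the $\sigma$-squares using that each $s_j$ is central and $\sigma$-fixed; the nonemptiness bookkeeping at the end (some $a_i$ or $s_jb_i$ is nonzero because $s_1\in F^*$ and not all $y_i$ vanish) is exactly the point that needs care, and you handle it. In \cite{LSU} the statement sits inside the machinery of hermitian forms and weak isotropy of involutions, whereas your computation uses nothing beyond the definition of weak isotropy; as you note, it also shows the hypothesis that $\sigma$ is orthogonal (and even formal reality of $F$) is irrelevant --- only that $\sigma$ is of the first kind and $d$ is a nonzero sum of squares. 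The one degenerate case worth keeping in mind is that your family of witnesses may reduce to a single nonzero element, in which case you have actually shown $\sigma$ is isotropic; since isotropic implies weakly isotropic, the conclusion stands as stated.
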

	
	We now show that for a \emph{Galois totally positive finite $2$-extension} of fields, the analogue of Lemma \ref{equivalent characterization of totally positive field extensions} holds for orthogonal involutions.
	\begin{theorem}\label{going down weak isotropy for tp galois 2-extnesions}
		Let $K/F$ be a Galois totally positive finite $2$-extension of formally real fields.  
		Let $(A, \sigma)$ be a central simple algebra with an orthogonal involution over $F$.  
		If $\sigma_K$ is weakly isotropic, then $\sigma$ is weakly isotropic over $F$.  
		
	\end{theorem}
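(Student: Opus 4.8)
The plan is to induct on $[K:F]=2^n$ and peel off one quadratic layer at a time, reducing to Lemma \ref{weak isotropy holds for quadratic extension}. The case $n=0$ is vacuous. For $n\ge 1$, write $G=\Gal(K/F)$; being a nontrivial finite $2$-group, $G$ has a subgroup $H$ of index $2$, and I would set $E:=K^H$, so that $[E:F]=2$ and $K/E$ is Galois with $[K:E]=2^{n-1}$. Since $F\subset E\subset K$ and $K/F$ is finite Galois and totally positive, Proposition \ref{finite Galois tp} gives that $K/E$ is again a finite Galois totally positive extension, and $E$ is formally real as a subfield of $K$. The algebra $A_E:=A\otimes_F E$ carries the orthogonal involution $\sigma_E$ of the first kind, and $(\sigma_E)_K=\sigma_K$ is weakly isotropic by hypothesis; so the induction hypothesis applied to $(A_E,\sigma_E)$ over $E$ and the extension $K/E$ yields that $\sigma_E$ is weakly isotropic over $E$.

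It then remains to descend through the single quadratic extension $E/F$. As $F$ has characteristic $0$, write $E=F(\sqrt d)$ with $d\in F^*$. The crux is to show that $d$ is a sum of squares in $F$; granting this, Lemma \ref{weak isotropy holds for quadratic extension} applies directly to $(A,\sigma)$, the extension $E=F(\sqrt d)$, and the weakly isotropic $\sigma_E$, and concludes that $\sigma$ is weakly isotropic over $F$. To establish the claim, I would note that the binary form $\langle 1,-d\rangle$ is isotropic over $E$, hence over $K$; total positivity of $K/F$ together with Lemma \ref{equivalent characterization of totally positive field extensions} then force $\langle 1,-d\rangle$ to be weakly isotropic over $F$, i.e.\ $\sum_i a_i^2=d\sum_i b_i^2$ in $F$ for some finite tuples not all zero. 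Formal reality of $F$ rules out $\sum_i b_i^2=0$, so $d=\bigl(\sum_i a_i^2\bigr)\bigl(\sum_i b_i^2\bigr)/\bigl(\sum_i b_i^2\bigr)^2$, which is a sum of squares since a product of two sums of squares is a sum of squares.

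I do not expect a genuine obstacle here; the one point needing care is to keep the induction passing only through totally positive Galois extensions, which is exactly what Proposition \ref{finite Galois tp} supplies and is the reason the hypothesis ``Galois $2$-extension'' (rather than merely ``$2$-extension'') is used. Everything else is the quadratic descent of Lemma \ref{weak isotropy holds for quadratic extension} combined with the elementary observation that the scaling factor of a weakly isotropic binary form over a formally real field is a sum of squares.
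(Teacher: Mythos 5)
Your proof is correct and takes essentially the same route as the paper's: both induct on $[K:F]$, peel off a quadratic subextension of $F$ inside $K$, use Proposition \ref{finite Galois tp} to keep the remaining extension Galois and totally positive, and descend through the quadratic step via Lemma \ref{weak isotropy holds for quadratic extension}. The only differences are stylistic: you argue by direct induction where the paper argues by contrapositive with a maximal intermediate field (via Zorn's lemma, unnecessary for a finite extension), and you make explicit the point the paper leaves implicit, namely that the quadratic subextension is $F(\sqrt d)$ with $d$ a sum of squares because $\langle 1,-d\rangle$ becomes isotropic over $K$ and $K/F$ is totally positive.
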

	\begin{proof}
		Suppose that $(A, \sigma)$ is strongly anisotropic over $F$. We consider the following set:  
		\[
		\Sigma = \left\{F_i/F \mid F \subseteq F_i \subseteq K \text{ and } \sigma_{F_i} \text{ is strongly anisotropic} \right\}.
		\]
		
		Since $F \in \Sigma$, the set $\Sigma$ is nonempty.  
		The inclusion of fields defines a partial ordering on $\Sigma$.  
		Assume that $\{F_i\}_{i\in I}$ is a chain in $\Sigma$.  
		We claim that $\bigcup_{i\in I} F_i \in \Sigma$.  
		Since $\{F_i\}_{i\in I}$ is a chain, the union $\bigcup_{i\in I} F_i$ is a field containing $F$ and contained in $K$.  
		If $\sigma$ is weakly isotropic over $\bigcup_{i\in I} F_i$, then there exists some $k \in I$ such that $\sigma$ is weakly isotropic over $F_k$,  
		which is a contradiction since $F_k \in \Sigma$.  
		Therefore, we conclude that $\bigcup_{i\in I} F_i \in \Sigma$.  
		Hence by Zorn's lemma, there exists a maximal element, say $E\in\Sigma$.
		
		Note that $E \neq F$. Indeed, suppose that $E = F$. Since ${\rm Gal}(K/F)$ is a $2$-group, there exists a normal subgroup $H \trianglelefteq {\rm Gal}(K/F)$ such that $K^H$ is a quadratic field extension of $F$.  
		Furthermore, since $K/F$ is totally positive, the extension $K^H/F$ is also totally positive.  
		By Lemma \ref{weak isotropy holds for quadratic extension}, $\sigma$ is strongly anisotropic over $K^H$,  
		contradicting the maximality of $E$.  
		
		So, we assume that $E \neq F$ and that $[K:F] = 2^n$. We proceed by induction on $n$.  
		If $n = 1$, then $E = K$, and we are done. Suppose that $E \neq K$ and that $[K:E] = 2^m$ with $m < n$.  
		Now, $K/E$ is a Galois $2$-extension, and by Proposition \ref{finite Galois tp}, it is totally positive.  
		By the induction hypothesis, $\sigma_K$ is strongly anisotropic.  
		
	\end{proof}
	
	However, we now show that the analogue of Lemma \ref{equivalent characterization of totally positive field extensions} \emph{does not} hold for extensions of orthogonal involutions over \emph{arbitrary} totally positive field extensions in Theorem \ref{weakly isotropic involutions under totally positive extensions does not go down}.  
	
	To proceed, we require \emph{a part of a theorem} due to Prestel and Ware. For completeness, we state the theorem below.
	\begin{theorem}\cite[Theorem 2]{Prestel-Ware}\label{Prestle-Ware-result}
		For a formally real field $F$ the following statements are equivalent.
		\begin{enumerate}
			\item $F$ is SAP and every torsion quadratic form $\langle a,b\rangle$ over $F$ represents a totally positive element of $F^*$.
			\item For all $a,b\in F^*$, the form $\langle 1,a,b,-ab\rangle$ is almost isotropic.
		\end{enumerate}
	\end{theorem}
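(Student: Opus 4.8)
This equivalence is due to Prestel and Ware, and here is the shape of a proof I would try to give. The plan is to stay inside the reduced theory of quadratic forms over a formally real field and to lean on two standard facts. The first is the classical characterization that $F$ is SAP if and only if the quaternary form $\langle 1,a,b,-ab\rangle$ is weakly isotropic for every $a,b\in F^*$. The second is the elementary computation that $\langle 1,a,b,-ab\rangle$ has signature $\pm 2$ at every ordering of $F$; in particular it is never a torsion form, and it always represents the totally positive element $1$, so for a form of this shape ``almost isotropic'' (a weakening of isotropy in the sense of Prestel and Ware) still implies ``weakly isotropic''.

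For the direction $(2)\Rightarrow(1)$ I would argue as follows. If $\langle 1,a,b,-ab\rangle$ is almost isotropic, then by the previous paragraph it is weakly isotropic; since this holds for all $a,b$, the SAP characterization gives that $F$ is SAP. For the remaining clause of (1), let $\langle c,d\rangle$ be a torsion binary form, so that $-cd$ is totally positive; rewrite it as $c\langle 1,-s\rangle$ with $s$ totally positive, apply hypothesis (2) to a specialization of the pair $(a,b)$ built out of $c$ and $s$, and extract a totally positive value of $\langle c,d\rangle$ from the resulting (almost) isotropy relation.

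For $(1)\Rightarrow(2)$: SAP already yields that $m\times\langle 1,a,b,-ab\rangle$ is isotropic for some $m$. The task is to promote this weak isotropy to almost isotropy. Here I would split $\langle 1,a,b,-ab\rangle$ into binary pieces, for instance $\langle 1,a\rangle\perp b\langle 1,-a\rangle$, observe that the weak-isotropy relation forces an associated binary form to be torsion, and then invoke the hypothesis that torsion binary forms represent totally positive elements to supply exactly the totally positive datum needed to witness almost isotropy.

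The step I expect to be the crux, in both directions, is the transit between weak isotropy — an assertion about an uncontrolled multiple $m\times q$ — and almost isotropy, which is an isotropy assertion for a single small form; the binary-torsion-form condition is precisely the mechanism that bridges the two, and tracing how $\langle 1,a,b,-ab\rangle$ decomposes into binary forms along a chain of isotropy relations is where the real work lies. A slicker alternative would be to recast everything on the space of orderings $X_F$: SAP is the absence of fans of order at least $4$, and both the binary condition and almost isotropy translate into statements about the values of forms over the fans of $X_F$, so the equivalence can then be checked fan by fan.
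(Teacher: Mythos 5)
First, note that the paper does not prove this statement at all: it is quoted verbatim as \cite[Theorem 2]{Prestel-Ware} and used as a black box (only the implication needed for Lemma \ref{laurent series over R is ED} is invoked), so there is no in-paper argument to compare yours against; the expected ``proof'' here is the citation itself.

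As a standalone proof attempt, your sketch has a genuine gap, and it sits exactly where you yourself locate the crux. You never state the Prestel--Ware definition of ``almost isotropic,'' yet two load-bearing claims depend on it: (i) that for forms of the shape $\langle 1,a,b,-ab\rangle$ almost isotropy implies weak isotropy ``because the form represents $1$ and is not torsion,'' and (ii) that in the direction $(1)\Rightarrow(2)$ the weak isotropy supplied by SAP can be ``promoted'' to almost isotropy by feeding in the hypothesis on binary torsion forms. Claim (i) is asserted, not derived, and without the definition there is no way to check it; claim (ii) is precisely the content of the theorem, and your outline (``split into binary pieces, observe that the weak-isotropy relation forces an associated binary form to be torsion, extract the totally positive datum'') names the ingredients without exhibiting the chain of isotropy relations that makes it work. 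The same applies to the second clause of $(2)\Rightarrow(1)$, where ``apply hypothesis (2) to a specialization of the pair $(a,b)$ built out of $c$ and $s$'' is a placeholder rather than an argument. The ingredients you do pin down are fine --- the signature of $\langle 1,a,b,-ab\rangle$ is indeed $\pm 2$ at every ordering, and Prestel's characterization of SAP via weak isotropy of these quaternary forms is a correct standard fact --- but as written the proposal is a plan for a proof, not a proof, and the missing bridge between weak isotropy and almost isotropy is the theorem's actual substance.
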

	We refer to \cite[page 241]{Prestel-Ware} for the definition of an \emph{almost isotropic quadratic form}. We note that every \emph{isotropic} quadratic form is \emph{almost isotropic}.

	\begin{lemma}\label{laurent series over R is ED}
		The Laurent series field over $\R$ in one variable, $\R((t))$ is ED.
	\end{lemma}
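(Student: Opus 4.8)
The plan is to show that $\R((t))$ is ED by combining two standard facts: first, that $\R((t))$ is a henselian valued field with residue field $\R$ and value group $\Z$, so every element can be written as $u\,t^{\varepsilon}$ with $u$ a unit and $\varepsilon\in\{0,1\}$ modulo squares, giving a clean description of $F^*/F^{*2}$ and of the orderings; and second, that on such a field the ED condition can be checked directly on binary forms. Concretely, $\R((t))$ has exactly two orderings: one in which $t>0$ and one in which $t<0$, each obtained by pulling back the unique ordering of the residue field $\R$ and choosing a sign for $t$.

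First I would set up the square-class description. Every nonzero $f\in\R((t))$ is $f=c\,t^{m}(1+t\cdot g)$ for some $c\in\R^*$, $m\in\Z$, $g\in\R[[t]]$; since $1+tg$ is a square (Hensel / binomial series over $\R$, characteristic $0$) and $c$ is $\pm$ a square, the square class of $f$ is determined by $\operatorname{sign}(c)$ and $m\bmod 2$. Hence $\R((t))^*/\R((t))^{*2}$ has order $4$, with representatives $\{1,-1,t,-t\}$. The sign of $f$ in the ordering with $t>0$ is $\operatorname{sign}(c)$, and in the ordering with $t<0$ it is $\operatorname{sign}(c)\cdot(-1)^m$. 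With this in hand, any quadratic form over $\R((t))$ is isometric to one of the shape $\langle c_1 t^{\varepsilon_1},\dots,c_n t^{\varepsilon_n}\rangle$ with $c_i=\pm1$ and $\varepsilon_i\in\{0,1\}$.

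Next I would prove the ED property by reordering the diagonal entries so that the required sign-monotonicity holds at both orderings simultaneously. The key observation is that among the four square classes, one can linearly order the representatives as $1,t,-t,-1$ (i.e.\ "totally positive, then mixed, then totally negative"), and at each of the two orderings this list is sign-monotone in the sense that once an entry is negative at a given ordering, all later entries are negative at that ordering: for the ordering with $t>0$ the sign sequence along $1,t,-t,-1$ is $+,+,-,-$, and for the ordering with $t<0$ it is $+,-,+,-$ — wait, the second is not monotone, so I would instead group the diagonal by square class and argue more carefully: collect all entries lying in the positive cone of \emph{both} orderings (square class $1$) first, then the two "mixed" classes $t,-t$, then the entries negative at both (class $-1$). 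One checks that within the mixed block the two classes can be interleaved so that at each ordering the sign pattern of the whole diagonal is a block of $+$'s followed by a block of $-$'s; since $\R((t))$ is SAP (it has only finitely many orderings, hence is SAP), and a binary torsion form over it represents a totally positive element (this is exactly where Theorem~\ref{Prestle-Ware-result} or a direct check enters), the needed rearrangements of pairs $\langle a,b\rangle\cong\langle a',b'\rangle$ within the form are available.

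The main obstacle I anticipate is the bookkeeping in the mixed block: a single global permutation of the $n$ entries need not make the form sign-monotone at both orderings at once, so the real content is to show that whenever two consecutive entries violate the monotonicity condition at some ordering, one may apply a binary isometry $\langle a,b\rangle\cong\langle a',b'\rangle$ (possible since $\langle a,b\rangle$ represents anything in its value set, and over $\R((t))$ the value sets are large enough — in particular every binary form that is indefinite at one ordering and positive definite at the other represents $1$, by the Prestel–Ware criterion applied to $\R((t))$) to fix the violation without creating a new one earlier. Once this local-move argument is in place, a straightforward induction on the number of violations (or on $n$) completes the proof that every form over $\R((t))$ is effectively diagonalizable, i.e.\ $\R((t))$ is ED.
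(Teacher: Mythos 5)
Your setup (the square classes $\{\pm 1,\pm t\}$ and the two orderings of $\R((t))$) is fine, but the key step of your rearrangement argument fails as stated. You claim that the entries in the ``mixed'' block, i.e.\ those in the square classes of $t$ and $-t$, ``can be interleaved so that at each ordering the sign pattern of the whole diagonal is a block of $+$'s followed by a block of $-$'s.'' No permutation or interleaving can achieve this: an entry in class $t$ has sign pattern $(+,-)$ at the two orderings and an entry in class $-t$ has pattern $(-,+)$, so as soon as both classes occur, any arrangement violates sign-monotonicity at one of the two orderings. The correct repair is not a permutation but an isometry that changes the diagonal entries: $\langle t,-t\rangle$ is isotropic, hence hyperbolic, hence isometric to $\langle 1,-1\rangle$, so occurrences of $t$ and $-t$ pair off into $\langle 1,-1\rangle$'s; after that, the arrangement (all class-$1$ entries, then the surviving entries of the single remaining mixed class, then all class-$(-1)$ entries) is sign-monotone at both orderings. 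You gesture at such binary isometries, but you justify their availability by appealing to SAP plus the representation property of binary torsion forms via ``the Prestel--Ware criterion applied to $\R((t))$'' --- which is circular, since that is precisely the hypothesis your argument is supposed to supply. It is easy to verify directly (a binary torsion form over $\R((t))$ must have opposite signs at both orderings, hence is $\langle a,-a\rangle$, hyperbolic, hence universal), but your writeup never does so. Also, your justification of SAP (``finitely many orderings, hence SAP'') rests on a false general principle: $\R((x))((y))$ has four orderings and is not SAP. For $\R((t))$ SAP does hold, but because there are exactly two orderings and $t$ separates them, not because the ordering space is finite.

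Note that the paper's route is much shorter and avoids all of this bookkeeping: using $\R((t))^*/\R((t))^{*2}=\{\pm 1,\pm t\}$ one checks case by case that every form $\langle 1,a,b,-ab\rangle$ with $a,b\in\R((t))^*$ is isotropic, hence almost isotropic, and then Theorem \ref{Prestle-Ware-result} (the Prestel--Ware theorem, of which the ED characterization is the relevant part) yields that $\R((t))$ is ED. If you want to keep your elementary rearrangement argument, you must replace the interleaving claim by the hyperbolic-plane isometry $\langle t,-t\rangle\simeq\langle 1,-1\rangle$ and drop the circular appeal to the criterion you are trying to avoid.
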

	\begin{proof}
		Put $F = \mathbb{R}((t))$. We show that for any $a, b \in F^*$, the quadratic form $\langle 1, a, b, -ab \rangle$ is \emph{isotropic}.  
		
		The square classes of $F$ satisfy $F^*/F^{*2} = \{\pm 1, \pm t\}$, and $\{-1, t\}$ forms a $\mathbb{Z}/2\mathbb{Z}$-basis of $F^*/F^{*2}$ (see \cite[VI.1.2]{Lam}). Therefore, the form $\langle 1, a, b, -ab \rangle$ is isotropic, and thus, by Theorem \ref{Prestle-Ware-result}, we conclude that $F$ is ED.
	\end{proof}
	
	\begin{proposition}
		The field extension $\R((t))\big/\Q((t))$ is totally positive.
	\end{proposition}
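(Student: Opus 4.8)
The plan is to verify the criterion of Lemma~\ref{equivalent characterization of totally positive field extensions}: it suffices to show that any quadratic form $q$ over $F:=\Q((t))$ that becomes isotropic over $K:=\R((t))$ is already weakly isotropic over $F$. Both $F$ and $K$ are complete discretely valued fields for the $t$-adic valuation, with residue fields $\Q$ and $\R$ and common uniformizer $t$, and $F\subseteq K$. Since every $1$-unit of $\Q[[t]]$ is a square in $F$, the group $F^*/F^{*2}$ is generated by the image of $\Q^*/\Q^{*2}$ together with the square class of $t$; hence, diagonalizing $q$ and collecting entries, we may write $q\cong q_1\perp t\,q_2$ over $F$, where $q_1,q_2$ are nondegenerate quadratic forms defined over $\Q$.

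Next I would apply Springer's theorem over the complete discretely valued field $K$: since $q_K\cong (q_1)_K\perp t\,(q_2)_K$, isotropy of $q_K$ is equivalent to isotropy over $\R$ of one of the residue forms $(q_1)_\R$, $(q_2)_\R$. We may assume $(q_1)_\R$ is isotropic, the other case being identical. As $\Q$ and $\R$ carry their unique ordering, with the ordering of $\R$ restricting to that of $\Q$, this says exactly that $q_1$ is indefinite over $\Q$. Now every indefinite quadratic form over $\Q$ is weakly isotropic over $\Q$: for $m$ large, $m\cdot q_1$ is still indefinite (hence isotropic over $\R$) and has dimension $\ge 5$ (hence is isotropic over every $\Q_p$), so $m\cdot q_1$ is isotropic over $\Q$ by the Hasse--Minkowski theorem.

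To finish: $m\cdot q\cong (m\cdot q_1)\perp t\,(m\cdot q_2)$ over $F$, and since $m\cdot q_1$ (a form over $\Q$) is isotropic over $\Q$, Springer's theorem over $F$ gives that $m\cdot q$ is isotropic over $F$; thus $q$ is weakly isotropic over $F$, and $K/F$ is totally positive by Lemma~\ref{equivalent characterization of totally positive field extensions}. I do not expect a genuine obstacle here; the only points needing care are the standard description of $F^*/F^{*2}$, which is what forces the Springer residue forms to live over $\Q$ and $\R$, the fact that signs of rationals agree in $\Q$ and $\R$, and the bookkeeping between the two residue forms of $q$. As an alternative route one can argue straight from the definition of totally positive: $\Q((t))$ is SAP --- its space of orderings consists of the two orderings determined by the sign of $t$ --- so its semiorderings are orderings, and each of these two orderings extends along $\Q((t))\hookrightarrow\R((t))$ to an ordering, hence a semiordering, of $\R((t))$.
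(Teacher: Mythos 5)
Your main argument is correct and follows essentially the same route as the paper: Becher's criterion (Lemma~\ref{equivalent characterization of totally positive field extensions}), the Springer decomposition $q\cong q_1\perp t\,q_2$ with residue forms over $\Q$ and $\R$, and the total positivity of $\R/\Q$ --- which the paper cites from \cite{MPS} and you instead re-derive via Hasse--Minkowski; that is only a cosmetic difference. One caution about your closing ``alternative route'': the inference ``$\Q((t))$ is SAP, so its semiorderings are orderings'' is not justified --- SAP does not in general imply that every semiordering is an ordering (that stronger property is, by Prestel's local--global principle, equivalent to the weak Hasse principle for quadratic forms), and the definition of total positivity requires extending \emph{semiorderings}, not just the two orderings of $\Q((t))$; so as written that sketch has a gap, even though the conclusion could be salvaged by a separate argument that every semiordering of $\Q((t))$ is indeed an ordering.
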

	\begin{proof}
		Let $q$ be a quadratic form over $\mathbb{Q}((t))$ that is isotropic over $\mathbb{R}((t))$. By Lemma \ref{equivalent characterization of totally positive field extensions}, it is enough to show that $q$ is weakly isotropic over $\mathbb{Q}((t))$. 
		By \cite[VI.1.5]{Lam}, there exist quadratic forms $q_1 = \langle u_1, u_2, \ldots, u_r \rangle$ and $q_2 = \langle u_{r+1}, \ldots, u_n \rangle$ such that the elements $u_i$ are units in the $t$-adic discrete valuation ring of $\mathbb{Q}((t))$, and $q \simeq q_1 \perp \langle t \rangle q_2$.  
		The $t$-adic discrete valuation on $\mathbb{R}((t))$ is an extension of the $t$-adic discrete valuation on $\mathbb{Q}((t))$. Using \cite[VI.1.9]{Lam}, the quadratic form $q$ is isotropic over $\mathbb{R}((t))$ if and only if either $\langle \overline{u_1}, \ldots, \overline{u_r} \rangle$ or $\langle \overline{u_{r+1}}, \ldots, \overline{u_n} \rangle$ is isotropic over the residue field $\overline{\mathbb{R}((t))} = \mathbb{R}$.
		
		As $\mathbb{Q}$ has a unique ordering that can be extended to $\mathbb{R}$, the field extension $\mathbb{R}/\mathbb{Q}$ is totally positive (see \cite[Proposition 4.2]{MPS}). Hence, either $\langle \overline{u_1}, \ldots, \overline{u_r} \rangle$ or $\langle \overline{u_{r+1}}, \ldots, \overline{u_n} \rangle$ is weakly isotropic over $\mathbb{Q} = \overline{\mathbb{Q}((t))}$. Thus, by \cite[VI.1.9]{Lam}, $q$ is weakly isotropic over $\mathbb{Q}((t))$.
		
	\end{proof}
	
	In the following theorem, we show that the analogue of Lemma \ref{equivalent characterization of totally positive field extensions} does not hold for orthogonal involutions. Hence, in general, the answer to Question \ref{question-going-down-weak-isotropy} is negative.
	
	\begin{theorem}\label{weakly isotropic involutions under totally positive extensions does not go down}
		There exists a central simple algebra $A$ over $\mathbb{Q}((t))$ with a strongly anisotropic orthogonal involution $\sigma$ that becomes weakly isotropic over $\mathbb{R}((t))$.
		
	\end{theorem}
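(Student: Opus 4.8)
The plan is to take $A=M_2(D)$ for a quaternion division $F$-algebra $D$ with good reduction at the $t$-adic place, and $\sigma$ the adjoint involution of a carefully chosen skew-hermitian form over $(D,\gamma)$, where $F=\mathbb{Q}((t))$, $K=\mathbb{R}((t))$ and $\gamma$ is the canonical symplectic involution of $D$ (a skew-hermitian form of rank two yields an \emph{orthogonal} involution on $M_2(D)$). Concretely one may take $D=(-1,-1)_F$, so that $D$ is division over $F$ (its norm form $\langle 1,1,1,1\rangle$ is anisotropic over $\mathbb{Q}$, hence over $\mathbb{Q}((t))$) and $D\otimes_F K=(-1,-1)_K$ is still division. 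Pick pure quaternions $u_1,u_2$ with rational coordinates, set $h=\langle u_1,u_2\rangle$ over $(D,\gamma)$ and let $\sigma=\sigma_h$. Since $h$ is defined over $\mathbb{Q}$ it is unramified, with residue $\overline h=\langle u_1,u_2\rangle$ over $((-1,-1)_{\mathbb{Q}},\gamma)$.

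First I would prove weak isotropy of $\sigma_K$ over $K=\mathbb{R}((t))$. Every nonzero pure quaternion of $(-1,-1)_K$ has totally negative square, lying in the square class of $-1$; hence for any pure quaternion $u$ one can choose a pure quaternion $v$ orthogonal to $u$ with $v^2=-1$, and then, using $\gamma(v)=-v$ and $vu=-uv$,
\[
\gamma(1)\,u\,1+\gamma(v)\,u\,v=u-vuv=u+uv^2=0 .
\]
Thus the rank-one subform $\langle u_1\rangle$ of $h_K$, and therefore $h_K$ itself, is weakly isotropic over $(D_K,\gamma)$; translating this back through the adjunction, $\sigma_K$ is weakly isotropic over $K$.

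The main obstacle is to prove that $\sigma$ is strongly anisotropic over $F=\mathbb{Q}((t))$. First I would reduce to $\mathbb{Q}$: invoking a Springer-type theorem for hermitian forms over the complete discrete valuation ring $\mathbb{Q}[[t]]$, a good-reduction hermitian form is anisotropic over $F$ exactly when its residue is anisotropic over $((-1,-1)_{\mathbb{Q}},\gamma)$; applying this simultaneously to $h$ and to all orthogonal sums $h\perp\dots\perp h$ (which remain of good reduction, with residue $\overline h\perp\dots\perp\overline h$) shows that $\sigma$ is strongly anisotropic over $F$ if and only if $\overline\sigma$ is strongly anisotropic over $\mathbb{Q}$. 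It then remains to choose $u_1,u_2$ making $\overline\sigma$ strongly anisotropic over $\mathbb{Q}$. For this one uses the arithmetic of $\mathbb{Q}$ and of the imaginary quadratic fields $\mathbb{Q}(\sqrt{u_i^{\,2}})$: every element $\gamma(x)u_ix$ equals $\operatorname{Nrd}(x)$ times a conjugate of $u_i$, and $\operatorname{Nrd}(x)$ is confined to a fixed coset of the norm group $N_{\mathbb{Q}(\sqrt{u_i^{\,2}})/\mathbb{Q}}(\mathbb{Q}(\sqrt{u_i^{\,2}})^{\times})$; a hypothetical relation $\sum_l\gamma(x_l)u_1x_l+\sum_l\gamma(y_l)u_2y_l=0$, once pushed through these norm constraints and the local structure of the norm groups, forces a nontrivial rational solution of an auxiliary ternary equation — for an appropriate choice such as $u_1=i+j+k$ one is led to $p^2+pq+q^2=2s^2$ — which is impossible by a $2$-adic obstruction. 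This contradiction establishes the strong anisotropy of $\overline\sigma$ over $\mathbb{Q}$, hence of $\sigma$ over $F$.

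Finally I would note that the construction is not accidental. The involution $\overline\sigma$ is weakly isotropic over \emph{every} completion of $\mathbb{Q}$ — over $\mathbb{R}$ by the computation above, and over each $\mathbb{Q}_p$ simply because $\mathbb{Q}_p$ is not formally real, so $1+\sum a_i^2=0$ as soon as $\sum a_i^2=-1$ — yet it is not weakly isotropic over $\mathbb{Q}$. This failure of a local--global principle for weak isotropy of orthogonal involutions is precisely what is unavailable for quadratic forms, where weak isotropy is governed by the real places alone; by Lemma~\ref{equivalent characterization of totally positive field extensions} that is why Becher's result propagates upward along totally positive extensions in the quadratic case, whereas the algebra $(A,\sigma)$ above, together with the total positivity of $\mathbb{R}((t))/\mathbb{Q}((t))$ established above, gives a negative answer to Question~\ref{question-going-down-weak-isotropy}.
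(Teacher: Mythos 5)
Your construction cannot work, and the failure is precisely at the step you flag as the ``main obstacle''. Everything in your example is constant, i.e.\ defined over $\mathbb{Q}$: $D=(-1,-1)$ and $u_1,u_2$ have rational coordinates. Your own Springer-type reduction then shows that strong anisotropy of $\sigma$ over $\mathbb{Q}((t))$ would force strong anisotropy of the residue involution $\overline\sigma=\sigma_{\overline h}$ over $\mathbb{Q}$ --- but this is impossible. Indeed, $\mathbb{Q}$ has a unique ordering, hence is an ED field (SAP is trivial, and a binary torsion form over a uniquely ordered field represents its positive entry), so by \cite[Theorem 3.8]{LSU} --- the very result the paper invokes for $\mathbb{R}$ and $\mathbb{R}((t))$ --- $\mathbb{Q}$ satisfies the weak Hasse principle: every totally indefinite algebra with involution of the first kind over $\mathbb{Q}$ is weakly isotropic. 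Your residue involution is totally indefinite: the rank-one computation you carry out over $\mathbb{R}$ (or \cite[Corollary 11.11(1a)]{KMRT}, giving signature $0$ for $\Int(u_1)\circ\gamma$ on $(-1,-1)_\mathbb{R}$) shows $\overline\sigma\otimes\mathbb{R}$ is weakly isotropic, hence $\overline\sigma$ is indefinite at the unique ordering of $\mathbb{Q}$. Therefore $\overline\sigma$ is weakly isotropic over $\mathbb{Q}$, and since a relation $\sum_l\gamma(x_l)u_1x_l+\sum_l\gamma(y_l)u_2y_l=0$ with rational quaternion entries holds verbatim over $\mathbb{Q}((t))$, $\sigma$ is weakly isotropic over $F$ for \emph{every} choice of $u_1,u_2$. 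In particular the ``failure of a local--global principle for weak isotropy over $\mathbb{Q}$'' asserted in your last paragraph is false, and your number-theoretic argument (the equation $p^2+pq+q^2=2s^2$) only rules out one specific two-term representation with a pure rational $v$ of square norm; it says nothing about general sums of hermitian squares, which do exist here.

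The structural point is that the example must exploit the failure of ED for $\mathbb{Q}((t))$, and no construction with good reduction and constant coefficients can do that, since weak isotropy over the uniquely ordered field $\mathbb{Q}$ is governed by the single real signature exactly as for quadratic forms. This is why the paper instead follows \cite[Example 3.4, Theorem 3.12]{LSU}: it takes the quaternion algebra $Q=(2,t)$, \emph{ramified} at the $t$-adic place, with the skew-hermitian form $h=\langle j,k\rangle$; the interplay between $\sqrt2\notin\mathbb{Q}$ and the two orderings of $\mathbb{Q}((t))$ (which disagree on $t$) makes $\sigma_h$ totally indefinite yet strongly anisotropic over $\mathbb{Q}((t))$, while weak isotropy over $\mathbb{R}((t))$ follows because $\mathbb{R}((t))$ is ED. If you want to salvage your write-up, you must replace the constant algebra $(-1,-1)$ and rational $u_1,u_2$ by data genuinely involving $t$ (and a constant like $2$ that is a square in $\mathbb{R}$ but not in $\mathbb{Q}$), at which point you are essentially reconstructing the paper's example.
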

	\begin{proof}
		By \cite[Example 3.4]{LSU}, $\mathbb{Q}((t))$ is SAP but not ED. Following \cite[Theorem 3.12]{LSU}, we take $Q = (2,t)$ over $\mathbb{Q}((t))$, a quaternion algebra, and a skew-hermitian form $h = \langle j, k \rangle$ with respect to the canonical involution $\gamma$ on $Q$.  
		
		Let $L = \mathbb{Q}((t))(\sqrt{2})$, and define $h_1 = \langle j \rangle$ and $h_2 = \langle k \rangle$. We have the following:  
		\[
		h_1(1,1) = j; \quad h_1(1,j) = t; \quad h_1(j,j) = \gamma(j)jj = -tj.
		\]
		
		Similarly, we have  
		\[
		h_2(1,1) = k = ij; \quad h_2(1,j) = ti; \quad h_2(j,j) = tij.
		\]  
		By \cite[\S10.3]{S}, there exists a map $\pi_2 \colon Q \to L$ given by $\alpha + \beta j \mapsto \beta$, where $\alpha, \beta \in L$. Therefore, the matrix representations of $\pi_2(h_i)$ are  
		\[
		\begin{pmatrix}
			1 & 0 \\ 
			0 & -t
		\end{pmatrix}
		\quad \text{and} \quad
		\begin{pmatrix}
			i & 0 \\ 
			0 & ti
		\end{pmatrix},
		\]
		respectively.
		Thus, over $L$, the hermitian form $h = \langle j, k \rangle = h_1 \perp h_2$ corresponds to a quadratic form  
		\[
		\langle 1, -t, \sqrt{2}, t\sqrt{2} \rangle.
		\]  
		This quadratic form is totally indefinite. Since $L/\mathbb{Q}((t))$ is a totally positive quadratic field extension, it follows that the involution corresponding to the Hermitian form $h$ on $M_2(Q)$ is totally indefinite as well (see \cite[Corollary 11.11(2(b))]{KMRT}). By Lemma \ref{laurent series over R is ED} and \cite[Theorem 3.8]{LSU}, $\sigma$ is weakly isotropic over $\mathbb{R}((t))$.
	\end{proof}
	
	\begin{remark}
		If $E/F$ is a totally positive field extension and $F$ is ED, then the answer to Question \ref{question-going-down-weak-isotropy} is affirmative (see \cite[Corollary 5.4]{MPS}). Furthermore, for \emph{totally decomposable} central simple algebras with orthogonal involutions, the answer to Question \ref{question-going-down-weak-isotropy} is also affirmative, see \cite[Theorem 1.3]{MPS}.
	\end{remark}
	
	In the rest of this section we collect a few results that are of the independent interest.
	
	Let $(A, \sigma)$ be a central simple algebra over a formally real field $F$ with an orthogonal involution, and suppose that $\sigma$ is strongly anisotropic. The following proposition shows the existence of a formally real Pythagorean field $K$ such that $\sigma_K$ remains strongly anisotropic, and the field $K$ so obtained is maximal among algebraic extensions of $F$ over which $\sigma$ remains strongly anisotropic. More precisely, we have the following.
	
	\begin{proposition}\label{Zorn's lemma for extension}
		Let $(A, \sigma)$ be a central simple algebra over a formally real field $F$ with an orthogonal involution. Assume that $\sigma$ is strongly anisotropic. Then, there exists a formally real Pythagorean algebraic field extension $K/F$ such that $\sigma_K$ remains strongly anisotropic, and for any proper algebraic field extension $L/K$, the involution $\sigma_L$ is weakly isotropic.
	\end{proposition}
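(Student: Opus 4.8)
The plan is a Zorn's lemma maximality argument: build $K$ as a maximal intermediate algebraic extension over which $\sigma$ stays strongly anisotropic, and then squeeze the Pythagorean property out of maximality using Lemma \ref{weak isotropy holds for quadratic extension}.

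First I would fix an algebraic closure $F_{\rm al}$ of $F$ and set
\[
\Sigma = \bigl\{\, E \mid F \subseteq E \subseteq F_{\rm al},\ \sigma_E \text{ is strongly anisotropic} \,\bigr\},
\]
partially ordered by inclusion. Since $\sigma$ is strongly anisotropic, $F \in \Sigma$, so $\Sigma \neq \emptyset$. Given a chain $\{E_i\}_{i \in I}$ in $\Sigma$, the union $E = \bigcup_{i} E_i$ is a field with $F \subseteq E \subseteq F_{\rm al}$. If $\sigma_E$ were weakly isotropic, a witnessing identity $\sum_{k=1}^{n} \sigma(x_k) x_k = 0$ with nonzero $x_k \in A \otimes_F E$ would involve only finitely many scalars from $E$, hence would already hold over some $E_j$ in the chain, contradicting $E_j \in \Sigma$. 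So $E \in \Sigma$, and Zorn's lemma gives a maximal element $K \in \Sigma$. For any proper algebraic extension $L/K$, embed $L$ over $K$ into $F_{\rm al}$ (an algebraic closure of $K$ coincides with one of $F$); its image properly contains $K$, so by maximality $\sigma_L$ is weakly isotropic, weak isotropy being invariant under isomorphism.

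Next I would verify that $K$ is formally real. If $-1 = \sum_{i=1}^{n} c_i^{2}$ in $K$, then taking $x_0 = 1$ and $x_i = c_i \cdot 1$ in $A \otimes_F K$ yields
\[
\sigma(x_0)x_0 + \sum_{i=1}^{n} \sigma(x_i) x_i = \Bigl(1 + \sum_{i=1}^{n} c_i^{2}\Bigr)\cdot 1 = 0,
\]
so $\sigma_K$ would be weakly isotropic, a contradiction. Hence $-1$ is not a sum of squares in $K$, i.e., $K$ is formally real. Finally, to see that $K$ is Pythagorean, suppose it is not: there is $d \in K$ that is a sum of squares in $K$ with $d \notin K^{*2}$. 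Then $L = K(\sqrt{d})$ is a proper quadratic algebraic extension of $K$, so by maximality $\sigma_L$ is weakly isotropic. Since $d$ is a sum of squares in $K$, Lemma \ref{weak isotropy holds for quadratic extension} (applied with base field $K$) forces $\sigma_K$ to be weakly isotropic, contradicting $K \in \Sigma$. Thus every sum of squares in $K$ is a square, so $K$ is Pythagorean, completing the proof.

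The whole argument is essentially routine once Lemma \ref{weak isotropy holds for quadratic extension} is in hand: that lemma is the only substantive input, and it is exactly what converts the maximality of $K$ into the Pythagorean conclusion. The only point requiring mild care is set-theoretic — running Zorn's lemma inside a fixed $F_{\rm al}$ so that $\Sigma$ is genuinely a set — together with the observation (used in the chain step) that a weak-isotropy relation is a \emph{finite} sum and therefore descends to a member of the chain. I do not anticipate a real obstacle beyond bookkeeping these details.
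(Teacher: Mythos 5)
Your proposal is correct and follows essentially the same route as the paper: a Zorn's lemma argument over intermediate fields in a fixed algebraic closure, with formal reality extracted from the weak-isotropy relation $\sum \sigma(\beta_i)\beta_i = \sum \beta_i^2$ and the Pythagorean property obtained from maximality via Lemma \ref{weak isotropy holds for quadratic extension} applied over $K$. Your write-up is if anything slightly more detailed (the finite-sum descent along the chain and the explicit verification of the statement about proper extensions $L/K$), but the substance is identical.
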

	\begin{proof}
		We fix an algebraic closure $F_{\rm al}$ of $F$. Consider the following set:  
		\[
		S = \{E/F: F \subseteq E \subseteq F_{\rm al} \text{ and } \sigma_E \text{ is strongly anisotropic} \}.
		\]
		
		By our assumption, $F \in S$. The inclusion of fields defines a partial ordering on $S$. Consider a chain $\{E_i\}$ in $S$. Then $E = \cup_i E_i$ is a field extension of $F$. It is easy to see that $\sigma_E$ is strongly anisotropic, and hence $E \in S$. By Zorn's lemma, $S$ has a maximal element, say $K$. 
		Note that $K$ is formally real. Indeed, if there exist elements $\beta_i \in K^*$ such that $\sum \beta_i^2 = 0$, then  
		\[
		\sum \sigma_K(\beta_i) \beta_i = \sum \beta_i^2 = 0,
		\]  
		i.e., $\sigma_K$ is weakly isotropic, a contradiction. Hence, $K$ is formally real.
		
		Now we show that $K$ is a Pythagorean field. Let $a \in K \setminus \{0\}$ be such that $a \in \sum K^2$, i.e., $a$ is totally positive. By Lemma \ref{weak isotropy holds for quadratic extension}, $\sigma_{K(\sqrt{a})}$ is strongly anisotropic. By the maximality of $K$, we must have $K = K(\sqrt{a})$, i.e., $a \in K^2$. In particular, $\sum K^2 \subset K^2$, which implies that $K$ is Pythagorean.
	\end{proof}
	
	The following result follows from the above Proposition \ref{Zorn's lemma for extension}. 
	\begin{corollary}\label{weak isotropy holds for Pythagorean closure}
		Let $A$ be a central simple algebra over $F$ with an orthogonal involution $\sigma$. If $\sigma$ is weakly isotropic over $F_{\rm{py}}$, then $\sigma$ is weakly isotropic.
	\end{corollary}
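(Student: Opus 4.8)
The plan is to deduce this directly from Proposition \ref{Zorn's lemma for extension} by contraposition. Assume $\sigma$ is strongly anisotropic over $F$; we must show that $\sigma_{F_{\rm py}}$ is strongly anisotropic.

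First I would observe that $F$ is necessarily formally real: were $-1$ a sum of squares, say $-1=\sum_{i=1}^{n}a_i^{2}$ with $a_i\in F^{*}$, then, since $\sigma$ restricts to the identity on $F$, we would have $\sigma(1)\cdot 1+\sum_{i=1}^{n}\sigma(a_i)a_i=1+\sum_{i=1}^{n}a_i^{2}=0$, contradicting the strong anisotropy of $\sigma$. Hence the hypotheses of Proposition \ref{Zorn's lemma for extension} are satisfied. Taking the algebraic closure $F_{\rm al}$ to be the one used to define $F_{\rm py}$, the proposition yields a formally real Pythagorean algebraic extension $K$ with $F\subseteq K\subseteq F_{\rm al}$ over which $\sigma$ remains strongly anisotropic.

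The crux is a minimality remark: $K$ is a Pythagorean subfield of $F_{\rm al}$ containing $F$, and $F_{\rm py}$ is by definition the smallest such field, so $F_{\rm py}\subseteq K$. It then suffices to note that strong anisotropy passes to subfields, i.e.\ that weak isotropy is preserved under scalar extension: a relation $\sum_{i}\sigma_{F_{\rm py}}(x_i)x_i=0$ with nonzero $x_i\in A\otimes_F F_{\rm py}$ remains such a relation in $A\otimes_F K$, because $A\otimes_F F_{\rm py}\hookrightarrow A\otimes_F K$ is injective and the $x_i$ stay nonzero; this would make $\sigma_K$ weakly isotropic, a contradiction. Therefore $\sigma_{F_{\rm py}}$ is strongly anisotropic, as desired.

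I do not anticipate a real obstacle here; the statement is a short corollary of the proposition. The only point worth flagging is that $F_{\rm py}$ and the maximal extension $K$ should be taken inside one fixed algebraic closure of $F$, which is harmless since any two Pythagorean closures of $F$ are $F$-isomorphic.
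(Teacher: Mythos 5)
Your proof is correct and matches the paper's approach: the paper gives no separate argument, stating only that the corollary follows from Proposition \ref{Zorn's lemma for extension}, and your deduction (contrapositive, $F_{\rm py}\subseteq K$ by minimality of the Pythagorean closure inside the fixed $F_{\rm al}$, and ascent of weak isotropy along $F_{\rm py}\subseteq K$) is exactly the intended one.
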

	
	Recall that over a Pythagorean field every weakly isotropic quadratic form is isotropic. We show that the analogue of this result does not hold for central simple algebras with orthogonal involutions over Pythagorean fields.

	\begin{proposition}\label{weak isotropy does not imply isotropy for involution}
		Let $\mathbb{H} = (-1,-1)_{\mathbb{R}}$ be the Hamiltonian division algebra over $\mathbb{R}$ with the orthogonal involution $\sigma = \Int(i) \circ \gamma$. Then, the involution $\sigma$ is anisotropic but weakly isotropic.
	\end{proposition}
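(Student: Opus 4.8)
The two assertions can be handled separately, and in both cases a direct computation inside $\mathbb{H}$ suffices. For anisotropy, observe that $\mathbb{H}$ is a division algebra and $\sigma$ is a bijection with $\sigma(0)=0$, so for any $x\neq 0$ we have $\sigma(x)\neq 0$, whence $\sigma(x)x\neq 0$ since $\mathbb{H}$ has no zero divisors. Thus no nonzero $x$ satisfies $\sigma(x)x=0$, i.e. $\sigma$ is anisotropic. (The same argument shows that \emph{any} involution on a division algebra is anisotropic, so this half is essentially formal.)

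For weak isotropy, the plan is first to compute the action of $\sigma=\Int(i)\circ\gamma$ on the standard basis $1,i,j,k$. Using $i^{-1}=-i$ together with $ij=k$, $jk=i$, $ki=j$, one checks that $\Int(i)$ fixes $1$ and $i$ and negates $j$ and $k$; since $\gamma$ fixes $1$ and negates $i,j,k$, this gives
\[
\sigma(1)=1,\qquad \sigma(i)=-i,\qquad \sigma(j)=j,\qquad \sigma(k)=k.
\]
In particular $\sigma(j)\,j=j^{2}=-1$, while $\sigma(1)\cdot 1=1$. Taking $n=2$, $x_{1}=1$ and $x_{2}=j$ (both nonzero) therefore yields
\[
\sigma(x_{1})x_{1}+\sigma(x_{2})x_{2}=1+j^{2}=0,
\]
so $\sigma$ is weakly isotropic.

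Conceptually, the same computation shows that for $x=x_{0}+x_{1}i+x_{2}j+x_{3}k$ the element $\sigma(x)x$ has zero $i$-component and scalar part $x_{0}^{2}+x_{1}^{2}-x_{2}^{2}-x_{3}^{2}$, so the trace form $x\mapsto\Trd(\sigma(x)x)$ on the $\R$-space $\mathbb{H}$ equals $2\langle 1,1,-1,-1\rangle$, which is isotropic; this reflects that $\sigma$, unlike the positive definite canonical involution $\gamma$ (for which $\gamma(x)x=\Nrd(x)$), is totally indefinite. There is no serious obstacle in the argument: the only point requiring care is the sign bookkeeping when evaluating $\sigma$ on the basis (notably getting $\sigma(j)=j$ rather than $-j$), after which the witness $\{1,j\}$ for weak isotropy is essentially forced by $j^{2}=-1$.
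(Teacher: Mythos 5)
Your proof is correct, but it takes a genuinely different and more elementary route than the paper. For anisotropy you use the clean observation that any involution on a division algebra is anisotropic (no zero divisors), where the paper only asserts a ``straightforward computation.'' For weak isotropy you compute $\sigma=\Int(i)\circ\gamma$ on the standard basis (your sign bookkeeping is right: $\sigma$ negates $i$ and fixes $1,j,k$) and exhibit the explicit witness $\sigma(1)\cdot 1+\sigma(j)\cdot j=1+j^{2}=0$, which is exactly what the definition of weak isotropy asks for. The paper instead argues abstractly: by \cite[Corollary 11.11(1a)]{KMRT} the involution has signature $0$, hence $(\mathbb{H},\sigma)$ is totally indefinite; since $\R$ is ED, \cite[Theorem 3.8]{LSU} (the property (WHA)) yields weak isotropy. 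Your approach buys self-containedness and an explicit pair of nonzero elements, at no cost in rigor; the paper's approach buys a connection to the general machinery (signatures, ED, WHA) that drives the rest of the article. Your closing remark that the trace form $x\mapsto\Trd(\sigma(x)x)$ is $2\langle 1,1,-1,-1\rangle$ is in fact the concrete content of the paper's ``$sgn(\sigma)=0$'' step, so the two arguments are consistent with each other.
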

	
	\begin{proof}
		A straightforward computation shows that $\sigma$ is anisotropic.We now show that $\sigma$ is weakly isotropic.  
		
		By \cite[Corollary 11.11 (1a)]{KMRT}, we have $sgn(\sigma) = 0$. Hence, $(\mathbb{H}, \sigma)$ is totally indefinite. We note that $\mathbb{R}$ is an ED field.  
		Indeed, given any regular quadratic form $q$ over $\mathbb{R}$, there exist $m, n \in \mathbb{N}$ such that $q \simeq m\langle -1 \rangle \perp n\langle 1 \rangle$. Hence, $q$ is effectively diagonalizable. By \cite[Theorem 3.8]{LSU}, $\mathbb{R}$ satisfies (WHA), i.e., every totally indefinite algebra with an involution of the first kind over $\mathbb{R}$ is weakly isotropic. In particular, $(\mathbb{H}, \sigma)$ must be weakly isotropic.	
	\end{proof}
	
	The following result shows that the sum of hermitian squares of an orthogonal involution need not be a hermitian square over a Pythagorean field.
	
	\begin{proposition}\label{sum of hermitian squares need not be a hermitian square over Pythagorean field}
		Let $Q = (t, t)$ be a quaternion central division algebra over $F((t))$, where   $F$ is a formally real Pythagorean field. Let $\sigma = {\rm Int}(i) \circ \gamma$ be an orthogonal involution on $Q$, where $\gamma$ is the canonical involution on $Q$. Then, there exists a sum of hermitian squares that is not itself a hermitian square in $(Q, \sigma)$.  
	\end{proposition}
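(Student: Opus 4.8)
\emph{Proof proposal.} The strategy is to use the reduced norm as an obstruction. For any involution of the first kind on $Q$ one has $\Nrd_Q\circ\sigma=\Nrd_Q$ (for instance, $\Nrd_Q(iyi^{-1})=\Nrd_Q(y)$ and $\Nrd_Q(\gamma(y))=\Nrd_Q(y)$), so every hermitian square $s=\sigma(x)x$ satisfies $\Nrd_Q(s)=\Nrd_Q(\sigma(x))\Nrd_Q(x)=\Nrd_Q(x)^2\in F((t))^{*2}$. Hence it suffices to exhibit two hermitian squares whose sum has non‑square reduced norm, and I claim $j+k=(1+j)+(-1+k)$ does the job.

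First I would record the action of $\sigma$ on the standard basis: since $\sigma=\Int(i)\circ\gamma$ and $\gamma$ negates $i,j,k$, one gets $\sigma(i)=-i$, $\sigma(j)=j$, $\sigma(k)=k$. In particular $\sigma$ restricts to the identity on the subfield $L:=F((t))(j)\cong F((t))(\sqrt t)$, so on $L$ a hermitian square is an ordinary square. Thus it is enough to check that $1+j$ is a square in $L$: looking for a square root $p+qj$ with $p,q\in F((t))$ leads to $p^2+tq^2=1$ and $2pq=1$, i.e. to $p^2=\tfrac12(1+\sqrt{1-t})$. Since $F((t))$ is Henselian with residue field of characteristic $\ne 2$, $\sqrt{1-t}$ exists in $F((t))$ and $\tfrac12(1+\sqrt{1-t})$ is a unit congruent to $1$ modulo $t$, hence a square. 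So $1+j=\sigma(x_1)x_1$ for a suitable $x_1\in L$.

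Next I would show that $-1+k$ is a hermitian square. Taking $x_2=a_0+b_1k$ with $a_0,b_1\in F((t))$, and using that $\sigma$ fixes $1$ and $k$ and that $k^2=-t^2$, we get $\sigma(x_2)x_2=(a_0+b_1k)^2=(a_0^2-t^2b_1^2)+2a_0b_1\,k$. Imposing $2a_0b_1=1$ and $a_0^2-t^2b_1^2=-1$ yields $4a_0^4+4a_0^2-t^2=0$, that is $a_0^2=\tfrac12(-1+\sqrt{1+t^2})$. Again $\sqrt{1+t^2}\in F((t))$ by Henselianity, and $\tfrac12(-1+\sqrt{1+t^2})=\tfrac14 t^2+O(t^4)$ has $t$‑adic valuation $2$ and leading coefficient $\tfrac14\in F^{*2}$, hence is a square in $F((t))$. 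So $a_0$, and therefore $x_2$, exists and $-1+k=\sigma(x_2)x_2$.

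Finally I would conclude: $j+k=\sigma(x_1)x_1+\sigma(x_2)x_2$ is a sum of two hermitian squares, whereas $\Nrd_Q(j+k)=-(j+k)^2=-(t-t^2)=t(t-1)$ has odd $t$‑adic valuation (as $1-t$ is a unit), so it is not a square in $F((t))$; by the first paragraph $j+k$ is therefore not a hermitian square. The step requiring care — which I regard as the main point of the argument — is verifying that $-1+k$ really is a hermitian square, i.e. that $\tfrac12(-1+\sqrt{1+t^2})$ is a square in $F((t))$; this comes down to the valuation/leading‑coefficient bookkeeping above and uses only that $F((t))$ is Henselian with $\operatorname{char}F\ne 2$, the hypothesis that $F$ is formally real being used merely to ensure that $Q=(t,t)$ is a division algebra.
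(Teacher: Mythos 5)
Your proposal is correct, but it follows a noticeably different route from the paper, so let me compare. Both arguments rest on the same obstruction: a nonzero hermitian square $\sigma(x)x$ has reduced norm $\Nrd(x)^2\in F((t))^{*2}$. The paper, however, picks a witness whose decomposition into hermitian squares is visible by inspection: it checks $\sigma(i)=-i$, $\sigma(j)=j$, $\sigma(k)=k$, $k^2=-t^2$, and then verifies directly that
\[
i\,\sigma(i)+(1+j)\,\sigma(1+j)+t^{-1}k\,\sigma(t^{-1}k)=2j,
\]
a sum of three hermitian squares; non-squareness is then deduced from $\Nrd(2j)=-4t$, arguing that $-t$ would have to be totally positive while there is an ordering of $F((t))$ making $-t$ negative. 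You instead take the witness $j+k=(1+j)+(-1+k)$, and the extra work in your version is exactly where you said it is: verifying that $1+j$ and $-1+k$ are hermitian squares, which you do by solving for square roots in the commutative fixed subfields $F((t))(j)$ and $F((t))(k)$ via Hensel's lemma (the valuation and leading-coefficient checks for $\tfrac12(1+\sqrt{1-t})$ and $\tfrac12(-1+\sqrt{1+t^2})$ are correct). In exchange, your non-squareness step is more elementary than the paper's: $\Nrd(j+k)=t(t-1)$ has odd $t$-adic valuation, so no appeal to orderings (or to the real-closure machinery behind them) is needed. So the trade-off is a simpler decomposition with an ordering-theoretic norm argument (paper) versus a Hensel-assisted decomposition with a purely valuation-theoretic norm argument (yours); both are sound, and neither actually uses the Pythagorean hypothesis on $F$, which serves only to make $F((t))$ Pythagorean and hence the example relevant.
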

	\begin{proof}
		As $F$ is formally real and Pythagorean, so is $F((t))$ (see \cite[VIII.4.11]{Lam}). Note that $\sigma(i) = -i$, $\sigma(j) = j$, and $\sigma(k) = \sigma(j) \sigma(i) = -ji = ij = k$. Moreover, we have $k^2 = i(ji)j = -t^2$.
		We obtain the following:  
		\[
		i \cdot \sigma(i) + (1 + j) \cdot \sigma(1 + j) + t^{-1} k \cdot \sigma(t^{-1} k) = 2j.
		\]  
		If there exists $x \in Q$ such that $2j = x\sigma(x)$, then the reduced norm $\Nrd(2j) = (\Nrd(x))^2$, i.e., $-4t = (\Nrd(x))^2 \in F((t))$. This implies that $-t$ is a square in $F((t))$, and hence $-t$ must be positive with respect to every ordering on $F((t))$ (see \cite[VIII.1.12]{Lam}).  
		However, there exists an ordering $P$ on $F((t))$, extending an ordering on $F$, such that $-t$ is negative with respect to $P$. Hence, $2j$ cannot be a hermitian square.  
	\end{proof}
	
	\subsection{Subfields of division algebras}
	
	In this section, we discuss a consequence of Becher's conjecture when the base field is Pythagorean (see Proposition \ref{no-totally-positive-subfield}). Furthermore, in Proposition \ref{becher conjecture particular case}, we prove a particular case of the conjecture. We begin by recalling the conjecture.

	\begin{conjecture}\cite[Conjecture 4.1]{Becher}\label{Becher's conjecture}
		Let $K/F$ be a totally positive field extension, and $D$ a central division algebra over $F$ of the exponent $2$. Then $pind(D)=pind(D\otimes K)$.
	\end{conjecture}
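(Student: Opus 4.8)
The plan is to rephrase the conjecture as a statement about indices over Pythagorean closures, to make a sequence of reductions that put us in the situation ``$D$ is a division algebra of index equal to its Pythagorean index over a Pythagorean field'', and there to run Becher's quadratic‑form criterion (Lemma \ref{equivalent characterization of totally positive field extensions}) using the fact that over a Pythagorean field weak isotropy of forms is isotropy and torsion forms are hyperbolic. First, since $\bigl(D\otimes_F K\bigr)\otimes_K K_{\rm py}=D\otimes_F K_{\rm py}$, we have $pind(D)=\mathrm{ind}(D\otimes_F F_{\rm py})$ and $pind(D\otimes_F K)=\mathrm{ind}(D\otimes_F K_{\rm py})$; as $K_{\rm py}$ is Pythagorean and contains $F$ it contains $F_{\rm py}$, whence $\mathrm{ind}(D\otimes_F K_{\rm py})\mid \mathrm{ind}(D\otimes_F F_{\rm py})$, i.e.\ $pind(D\otimes_F K)\mid pind(D)$ automatically. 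Thus the whole content is the reverse divisibility: the index does not drop. Next I would replace $K$ by $K_{\rm py}$: the extension $K_{\rm py}/K$ is a (transfinite) tower of extensions $F'(\sqrt d)/F'$ with $d\in\sum F'^{2}$, each totally positive (this is the form‑theoretic input behind Lemma \ref{weak isotropy holds for quadratic extension}), and composites and directed unions of totally positive extensions are totally positive by Lemma \ref{equivalent characterization of totally positive field extensions}; hence $K_{\rm py}/F$ is totally positive and we may assume $K$ is Pythagorean. Finally, choosing a finite subextension $F\subseteq F'\subseteq F_{\rm py}$ with $\mathrm{ind}(D\otimes_F F')=\mathrm{ind}(D\otimes_F F_{\rm py})=pind(D)$ (the index stabilises along the tower to $F_{\rm py}$), and noting $F'_{\rm py}=F_{\rm py}$, we may pass from $F$ to $F'$; here we need $K/F'$ to be totally positive, and I expect this to follow by a real‑root argument in the spirit of Proposition \ref{finite Galois tp}, using that $F'\subseteq F_{\rm py}$. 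After these reductions $D$ is a \emph{division} algebra over $F$ of index $2^{r}=pind(D)$, $K$ is Pythagorean, $K/F$ is totally positive, and the goal is to show $D\otimes_F K$ is still division.

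The second ingredient is the translation of index into quadratic forms. Over the Pythagorean field $F_{\rm py}$, an anisotropic form stays strongly anisotropic, so weak isotropy over $F$ is exactly isotropy over $F_{\rm py}$; moreover a torsion form over $F_{\rm py}$ is hyperbolic (the Witt ring of a Pythagorean field has no nilpotents). Consequently, for a quadratic form $q$ over $F$ that ``detects the index drop'' of $D$, Becher's criterion says: if $q\otimes K$ is isotropic then $q\otimes F_{\rm py}$ is isotropic. When $r=1$, $D$ is a quaternion algebra with norm (Pfister) form $\pi$; $D\otimes K$ being split means $\pi\otimes K$ is isotropic, hence $\pi$ is isotropic over $F_{\rm py}$, hence $\pi$ is torsion and so hyperbolic over $F_{\rm py}$, i.e.\ $pind(D)=1$ — contradicting that $D$ is division over $F$ with $pind(D)=2$. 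When $r=2$, by Albert $D=Q_1\otimes_F Q_2$ with Albert form $q_A$ over $F$; $\mathrm{ind}(D\otimes_F K)\le 2$ forces $q_A\otimes K$ isotropic, hence $q_A\otimes F_{\rm py}$ isotropic, hence $\mathrm{ind}(D\otimes_F F_{\rm py})\le 2$, contradicting $pind(D)=4$. This already proves the conjecture whenever $pind(D)\le 4$, and in particular recovers the case $n\le 2$ of Proposition \ref{becher conjecture particular case} with no subfield hypothesis.

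For $r\ge 3$ the clean argument breaks down, and this is where I expect the main obstacle. There is no single quadratic form whose isotropy over a field detects ``$\mathrm{ind}\le 2^{r-1}$'', and by Amitsur--Rowen--Tignol an exponent‑$2$ algebra of index $2^r$ need not be a tensor product of quaternion algebras; so neither the Albert‑form trick nor a naive induction on quaternion factors applies. The approach of Proposition \ref{becher conjecture particular case} circumvents this when $D$ contains a subfield $L\subset F_{\rm py}$ with $[L:F]=2^{n-2}$: then $D\otimes_F L$ has index $4$, $LK/L$ is again totally positive, the index‑$\le 4$ case above applies over $L$, and one transports the conclusion back to $F_{\rm py}$ using $L\subset F_{\rm py}$. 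For the conjecture in full one would need either (a) to produce, inside every exponent‑$2$ division algebra of index $2^r$, a subfield lying in the Pythagorean closure that lowers the index — which is exactly what fails for non‑decomposable algebras — or (b) a higher‑degree cohomological invariant of $[D]$ that, like the norm and Albert forms, is ``governed by signatures'' and hence insensitive to totally positive extensions by Lemma \ref{equivalent characterization of totally positive field extensions}. I do not see how to do either at present, and it is this gap, together with the totally‑positivity lemmas needed for the reduction in the first paragraph (the ``upper part'' analogues of Proposition \ref{finite Galois tp}), that keeps the conjecture open. Note also that the alternative, involution‑theoretic route — build an anisotropic orthogonal involution on $D\otimes_F F_{\rm py}$ (it is automatically anisotropic, $D$ being division) and try to descend weak isotropy via Corollary \ref{weak isotropy holds for Pythagorean closure} — is blocked by Theorem \ref{weakly isotropic involutions under totally positive extensions does not go down}, since the answer to Question \ref{question-going-down-weak-isotropy} is negative in general.
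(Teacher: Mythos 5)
The statement you were asked about is Becher's Conjecture 4.1, which the paper only \emph{quotes}: there is no proof of it in the paper, and the authors verify just a special case (Proposition \ref{becher conjecture particular case}) under the extra hypotheses that $K/F$ is Galois and that $D$ contains a subfield $L\subset F_{\rm py}$ with $[L:F]=2^{n-2}$. Your proposal correctly treats it as open: you do not claim a proof, and the obstruction you name for Pythagorean index $2^{r}$ with $r\ge 3$ (no single quadratic form detects $\mathrm{ind}\le 2^{r-1}$, and exponent-$2$ algebras need not decompose into quaternions) is exactly why the statement remains a conjecture. So the gap is unavoidable and honestly flagged; there is no paper proof against which the missing step could be measured.

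As for the partial content: your low-index mechanism (norm form for quaternions, Albert form for biquaternions, weak isotropy equals isotropy over Pythagorean fields, and $\operatorname{pind}(D\otimes_F K)\mid\operatorname{pind}(D)$ coming for free from $F_{\rm py}\subseteq K_{\rm py}$) is sound and is the same engine behind the paper's special case, which uses the Double Centralizer Theorem to cut $D$ down to $C_D(L)$ with $[C_D(L):L]=2^4$ and then invokes \cite[Theorem 1.5]{MPS}. However, one step of your reduction chain is itself unproved and nontrivial: passing from $F$ to a finite subextension $F'\subseteq F_{\rm py}$ with $\mathrm{ind}(D\otimes_F F')=\operatorname{pind}(D)$ requires the compositum $KF'$ (note $F'$ need not embed into $K$) to be totally positive over $F'$, and nothing in the paper supplies this for an \emph{arbitrary} totally positive $K/F$: Proposition \ref{finite Galois tp} concerns intermediate fields of a \emph{finite Galois} totally positive extension, and in Proposition \ref{becher conjecture particular case} the Galois hypothesis on $K/F$ is carried precisely to make $KL/L$ totally positive. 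Hence your claims that the argument ``already proves the conjecture whenever $\operatorname{pind}(D)\le 4$'' and ``recovers the case $n\le 2$ of Proposition \ref{becher conjecture particular case} with no subfield hypothesis'' are not established as written; they rest on that expected-but-unproved going-up lemma (the replacement of $K$ by $K_{\rm py}$, by contrast, is fine, since $K_{\rm py}/K$ is a tower of totally positive quadratic extensions and total positivity is transitive). If you can prove that going-up statement for finite subextensions of $F_{\rm py}$ without a Galois assumption, you would genuinely strengthen the paper's special case; as it stands, your text is an accurate map of where the conjecture is known and where it is stuck, not a proof of it.
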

	We recall the following result due to Marshall from \cite[Theorem 4, Corollary]{Marshall}.
	
	\begin{theorem}[Marshall's result]\label{Marshall's result}
		The kernel of the map \[{}_2Br(F)\to\prod_{P\in X_F}{}_2Br(F_P)\] is generated by the classes of quaternion algebras of the form $(s,t)_F$ with $s\in \sum F^2$ and $t\in F^*$.
	\end{theorem}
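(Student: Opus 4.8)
The strategy is to pass from central simple algebras to quadratic forms using Merkurjev's theorem, which gives an isomorphism $e_2\colon I^2F/I^3F \xrightarrow{\sim} {}_2\mathrm{Br}(F)$ sending the class of the $2$-fold Pfister form $\langle\langle a,b\rangle\rangle$ to $[(a,b)_F]$. For a real closure $F_P$ one has ${}_2\mathrm{Br}(F_P) = I^2F_P/I^3F_P \cong \mathbb{Z}/2\mathbb{Z}$, and under $e_2$ the scalar-extension map becomes $q + I^3F \mapsto \tfrac14\mathrm{sgn}_P(q) \bmod 2$ (recall $\mathrm{sgn}_P(q)\in 4\mathbb{Z}$ for $q\in I^2F$). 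Hence $\xi = e_2(q)$ lies in the kernel $K$ exactly when $8\mid\mathrm{sgn}_P(q)$ for every ordering $P$ of $F$.

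Next I would identify the totally split quaternion classes. If $(a,b)_F$ splits over every real closure of $F$, then $\langle\langle a,b\rangle\rangle$ is a torsion Pfister form, so the form $\langle a,b,-ab\rangle$ has signature $+1$ at every ordering and is therefore not negative definite at any ordering; by the local--global principle for representation of totally positive elements it represents some $s\in\sum F^2$. Writing $\langle a,b,-ab\rangle\cong\langle s\rangle\perp\langle u,v\rangle$ and comparing discriminants modulo squares gives $uv\equiv -s$, whence $\langle\langle a,b\rangle\rangle\cong\langle1,-s,-u,su\rangle=\langle\langle s,u\rangle\rangle$, i.e. $(a,b)_F=(s,u)_F$. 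Conversely every $(s,t)_F$ with $s\in\sum F^2$ splits over each $F_P$. Thus the subgroup $N$ generated by the algebras $(s,t)_F$, $s\in\sum F^2$, coincides with the subgroup of ${}_2\mathrm{Br}(F)$ generated by all totally split quaternion classes, and $N\subseteq K$; it remains to prove $K\subseteq N$.

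Here I would invoke the reduced theory of quadratic forms. By Pfister's local--global principle, $\ker(WF\to\prod_P WF_P)=W_tF$, so $\ker(I^nF\to\prod_P I^nF_P)=I^n_tF:=I^nF\cap W_tF$. Applying the snake lemma to Merkurjev's exact sequence $0\to I^3F\to I^2F\xrightarrow{e_2}{}_2\mathrm{Br}(F)\to0$ and its scalar extension to all $F_P$ yields an exact sequence
\[
0 \to I^2_tF/I^3_tF \to K \xrightarrow{\ \delta\ } \mathrm{coker}\Bigl(I^3F \to \textstyle\prod_P I^3F_P\Bigr),
\]
in which $\mathrm{im}(\delta)=\ker\bigl(\mathrm{coker}(I^3F\to\prod I^3F_P)\to\mathrm{coker}(I^2F\to\prod I^2F_P)\bigr)$. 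Unwinding definitions, $\delta=0$ is equivalent to the assertion that every $q\in I^2F$ with $8\mid\mathrm{sgn}_P(q)$ for all $P$ already lies in $I^3F+W_tF$. I would prove this from Br\"ocker's description of $\mathrm{sgn}(I^nF)$ by fan congruences: over a fan $V$ of cardinality $2^m$ one has $\mathrm{sgn}(I^n(V))=\{g\in(2^n\mathbb{Z})^V:\sum_{P\in V}g(P)\equiv0\bmod 2^{\max(n,m)}\}$, and intersecting the $n=2$ case with $\{g:8\mid g\}$ reproduces exactly the $n=3$ case, for every fan $V$; hence $\mathrm{sgn}(I^2F)\cap\{f:8\mid f\}=\mathrm{sgn}(I^3F)$, which gives $\delta=0$ and $K=I^2_tF/I^3_tF$. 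Finally, the structure theory of torsion quadratic forms shows that $I^2_tF/I^3_tF$ is generated by the classes of torsion $2$-fold Pfister forms; by the second paragraph these are precisely the classes $[(s,t)_F]$ with $s\in\sum F^2$, so $I^2_tF/I^3_tF=N$, and therefore $K=N$.

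The crux is the vanishing of $\delta$, i.e. the claim that a form in $I^2F$ whose total signature is divisible by $8$ lies in $I^3F+W_tF$: this is the one point where the fan-theoretic structure of the reduced Witt ring is genuinely needed, whereas the Merkurjev translation, the identification of totally split quaternions with the $(s,t)_F$, and the reduction of torsion forms in $I^2F$ to torsion $2$-fold Pfister forms are comparatively formal once that input is in place.
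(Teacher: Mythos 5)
First, note that the paper does not prove this statement at all: it is quoted with a citation to Marshall, so your argument is being measured against the literature rather than against an in-paper proof. Your overall skeleton (Merkurjev's theorem, Pfister's local--global principle, the snake lemma, and the fan-congruence description of $\mathrm{sgn}(I^nF)$) is a reasonable modern route, and the third paragraph is essentially correct once one cites the Becker--Br\"ocker/Marshall theorem describing the image of the total signature on $I^nF$; your intersection computation showing $\mathrm{sgn}(I^2F)\cap\{f:8\mid f\}=\mathrm{sgn}(I^3F)$ does follow from that description. But two load-bearing steps are not justified. The serious one is in your second paragraph: there is no ``local--global principle for representation of totally positive elements'' of the generality you invoke. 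Over $F=\mathbb{Q}((t))$ the binary torsion form $\langle t,-2t\rangle$ has signature $0$ at both orderings (so it is nowhere negative definite), yet every nonzero value $t(x^2-2y^2)$ has odd $t$-adic valuation and hence is never totally positive; the possible failure of exactly this representation property is the content of condition (1) in the Prestel--Ware theorem quoted in this paper, and is why $\mathbb{Q}((t))$ is SAP but not ED. The fact you actually need --- that every torsion $2$-fold Pfister form is isometric to $\langle\langle s,t\rangle\rangle$ with $s\in\sum F^2$ --- is true, but it is a nontrivial theorem of Elman--Lam (descent through the Pythagorean closure), not a formal consequence of total indefiniteness.

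The second gap is your closing assertion that ``the structure theory of torsion quadratic forms'' shows $I^2_tF/I^3_tF$ is generated by torsion $2$-fold Pfister forms. After Merkurjev, this statement is essentially the theorem you are trying to prove, so it cannot be waved at: it requires the Elman--Lam results that $W_tF$ is additively generated by binary torsion forms and that $I^2F\cap W_tF=IF\cdot W_tF$. If you cite those, the generators come out directly as scalar multiples of forms $a\langle 1,c\rangle\otimes\langle 1,-s\rangle$ with $s\in\sum F^2$, which modulo $I^3F$ are the Pfister forms $\langle\langle s,u\rangle\rangle$ and map under $e_2$ to the quaternion classes $[(s,u)_F]$; this both closes the gap and makes your problematic second paragraph unnecessary. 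So the proof is repairable, but as written the two steps above are respectively wrongly justified and unproven, and they carry most of the weight of the theorem.
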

	
	\begin{lemma}
		Let $F$ be a formally real field, and let $D$ be a central division algebra over $F$ of exponent $2$. If there exists a totally positive maximal subfield $K$ of $D$, then $\operatorname{pind}(D) = 1$.  
		
	\end{lemma}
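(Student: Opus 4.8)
The plan is to reduce the statement to the splitting of $D$ at the real closures of $F$ and then to apply Marshall's theorem. I claim it is enough to show that $D\otimes_F F_P$ is split for every $P\in X_F$. Granting this, $[D]$ lies in the kernel of ${}_2Br(F)\to\prod_{P\in X_F}{}_2Br(F_P)$, so Theorem \ref{Marshall's result} lets us write $[D]=\sum_{j}[(s_j,t_j)_F]$ with $s_j\in\sum F^2$ and $t_j\in F^*$. Since $F_{\rm py}$ is Pythagorean, each $s_j$ is a square in $F_{\rm py}$, hence every $(s_j,t_j)_{F_{\rm py}}$ splits; therefore $D\otimes_F F_{\rm py}$ splits, i.e. $\operatorname{pind}(D)=1$.

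The first step is a general observation about totally positive extensions: every ordering of $F$ extends to an ordering of $K$. To see this, suppose $P\in X_F$ does not extend to an ordering of $K$. By the Artin--Schreier criterion, the preordering of $K$ generated by $P$ then contains $-1$, so $-1=\sum_{i=1}^m p_i x_i^2$ for some $p_i\in P$ and $x_i\in K$. By Definition \ref{def-totally-positive}, the semiordering $P$ on $F$ extends to a semiordering $S$ on $K$; since each $x_i^2\in K^2$ and $p_i\in P\subseteq S$, Definition \ref{semiorder} gives $p_i x_i^2\in K^2\cdot S\subseteq S$, whence $-1=\sum_i p_i x_i^2\in S$. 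This contradicts $1\in S$ together with $S\cap(-S)=\{0\}$.

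Now fix $P\in X_F$, pick an ordering $\tilde P$ of $K$ restricting to $P$, and let $K_{\tilde P}$ be a real closure of $(K,\tilde P)$. As $K/F$ is algebraic, $K_{\tilde P}$ is a real closed algebraic extension of $F$ whose ordering restricts to $P$, so by uniqueness of real closures it is $F$-isomorphic to $F_P$. On the other hand, $K$ is a maximal subfield of the division algebra $D$, hence a splitting field of $D$, so $D\otimes_F K$ is a matrix algebra over $K$; extending scalars to $K_{\tilde P}$ shows $D\otimes_F F_P\cong D\otimes_F K_{\tilde P}$ is split. Since $P$ was arbitrary, the reduction in the first paragraph gives $\operatorname{pind}(D)=1$.

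The delicate point is the orderings-extend claim: semiorderings need not be closed under multiplication, so one cannot just multiply $p_i\in S$ by $x_i^2\in S$; it is the axiom $K^2\cdot S\subseteq S$, applied to the square $x_i^2$, that keeps each term $p_i x_i^2$ in $S$. Everything else is standard --- that a maximal subfield of a division algebra splits it, that the real closure of an algebraic extension $K/F$ at an ordering lying over $P$ coincides with $F_P$, and the bookkeeping with Marshall's theorem and the Pythagorean closure.
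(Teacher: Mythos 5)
Your proof is correct and follows essentially the same route as the paper's: you show $D$ splits over every real closure $F_P$ by combining the fact that the maximal subfield $K$ splits $D$ with the extension of each ordering of $F$ to $K$ (forced by total positivity), and then conclude via Marshall's theorem that $D\otimes_F F_{\rm py}$ is split. The only difference is that you spell out the ordering-extension step (via semiorderings and the Artin--Schreier criterion) and the final Marshall-to-Pythagorean-closure step, which the paper's proof leaves implicit.
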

	\begin{proof}
		We have $D \otimes_F K \sim K \in \operatorname{Br}(K)$ since $K$ is a maximal subfield of $D$. Since $K/F$ is a finite field extension and $K/F$ is totally positive, we have $K_P = F_P$ for every $P \in X_F$. Thus, for every ordering $P \in X_F$, it follows that $D \otimes_F F_P \sim F_P \in \operatorname{Br}(F_P)$. By Marshall's result (Theorem \ref{Marshall's result}), we obtain $D \otimes_F F_{\rm py} \sim F_{\rm py}$, and hence $\operatorname{pind}(D) = 1$.  
		
	\end{proof}
	\begin{corollary}
		Let $D$ be a central division algebra of exponent $2$. If $\operatorname{pind}(D) \neq 1$, then $D$ does not have a totally positive maximal subfield.  
		
	\end{corollary}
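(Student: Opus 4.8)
The plan is to observe that this Corollary is nothing but the contrapositive of the Lemma immediately preceding it. Suppose, for contradiction, that $D$ is a central division algebra of exponent $2$ with $\operatorname{pind}(D) \neq 1$, and that $D$ nonetheless admits a totally positive maximal subfield $K$. Then the hypotheses of the preceding Lemma are satisfied, so we may conclude $\operatorname{pind}(D) = 1$, contradicting $\operatorname{pind}(D) \neq 1$. Hence no such $K$ exists, which is exactly the assertion.

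If one prefers to spell the argument out directly rather than invoke the Lemma verbatim, the key steps are as follows. First, since $K$ is a maximal subfield of the division algebra $D$, we have $[K:F]^2 = \dim_F D$ and $D \otimes_F K$ splits, i.e.\ $D \otimes_F K \sim K$ in $\operatorname{Br}(K)$. Second, since $K/F$ is a finite totally positive extension of formally real fields, every ordering $P$ of $F$ extends to $K$ and the associated real closures coincide, $K_P = F_P$; consequently $D \otimes_F F_P \sim F_P$ for every $P \in X_F$. Third, by Marshall's result (Theorem \ref{Marshall's result}), the class of $D$ in ${}_2\operatorname{Br}(F)$ lies in the subgroup generated by quaternion symbols $(s,t)_F$ with $s \in \sum F^2$; each such symbol becomes split over $F_{\rm py}$, so $D \otimes_F F_{\rm py} \sim F_{\rm py}$, which yields $\operatorname{pind}(D) = 1$, again a contradiction.

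There is no genuine obstacle here: the mathematical content is entirely contained in the Lemma (and ultimately in Marshall's theorem), and the Corollary is a purely formal restatement by contraposition. The only point requiring a modicum of care is the standing assumption that all fields in play are formally real, so that the Pythagorean closure $F_{\rm py}$ and the real closures $F_P$ are defined; this hypothesis is in force throughout the paper.
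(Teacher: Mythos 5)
Your proof is correct and matches the paper's intent exactly: the corollary is stated without a separate proof precisely because it is the contrapositive of the preceding lemma, and your expanded version simply reproduces that lemma's argument (splitting over the maximal subfield, $K_P=F_P$ for each ordering, then Marshall's theorem to split $D$ over $F_{\rm py}$). Nothing further is needed.
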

	%
	Now, we discuss a consequence of Becher’s conjecture when the base field is Pythagorean.  
	
	\begin{proposition}\label{no-totally-positive-subfield}  
		If Becher's conjecture holds, then a central simple algebra over a formally real \emph{Pythagorean} field $F$ of exponent $2$ cannot have a totally positive subfield other than $F$.  
	\end{proposition}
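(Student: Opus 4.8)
The plan is to argue by contradiction, working throughout with the Pythagorean index. Suppose $A$ is a central simple algebra of exponent $2$ over the Pythagorean field $F$ and that $A$ has a subfield $L\neq F$ with $L/F$ totally positive; let $D$ be the division algebra underlying $A$, so that $[D]=[A]$ in $\operatorname{Br}(F)$, $D$ has exponent $2$, and $\operatorname{ind}(D)=2^{k}$ for some $k\ge 1$ (the index of an exponent-$2$ algebra is a power of $2$). Because $F$ is Pythagorean, $F_{\rm py}=F$, so $\operatorname{pind}(D)=\operatorname{ind}(D\otimes_F F_{\rm py})=\operatorname{ind}(D)=2^{k}$. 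I would first reduce to the situation in which $L$ is a subfield of the division algebra $D$ itself; granting that, assume $F\subsetneq L\subseteq D$.

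The core of the argument is a dimension count on the centralizer. Since $L\subseteq D$ is a subfield, $[L:F]$ divides $\deg D=2^{k}$, so $[L:F]=2^{\ell}$ with $1\le\ell\le k$, and the centralizer $C:=C_{D}(L)$ is a central simple $L$-algebra of degree $\deg D/[L:F]=2^{k-\ell}$ that is Brauer equivalent over $L$ to $D\otimes_F L$. Hence
\[
\operatorname{ind}(D\otimes_F L)=\operatorname{ind}(C)\le\deg C=2^{k-\ell}\le 2^{k-1}<2^{k}=\operatorname{ind}(D),
\]
and since the index does not increase under scalar extension, $\operatorname{ind}(D\otimes_F L_{\rm py})\le\operatorname{ind}(D\otimes_F L)<\operatorname{ind}(D)$. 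On the other hand, directly from the definition of the Pythagorean index over $L$,
\[
\operatorname{pind}(D\otimes_F L)=\operatorname{ind}\bigl((D\otimes_F L)\otimes_L L_{\rm py}\bigr)=\operatorname{ind}(D\otimes_F L_{\rm py})<\operatorname{ind}(D)=\operatorname{pind}(D).
\]
But $L/F$ is totally positive and $D$ is a central division algebra of exponent $2$, so Becher's conjecture (Conjecture \ref{Becher's conjecture}) forces $\operatorname{pind}(D)=\operatorname{pind}(D\otimes_F L)$, a contradiction. Therefore no such $L$ exists.

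The step I expect to be the main obstacle is the reduction to a subfield of the division algebra $D$, equivalently, ensuring that the totally positive subfield one works with actually lowers $\operatorname{ind}(D)$. If $A$ is itself a division algebra this is immediate, because then every proper subfield has $2$-power degree at least $2$ over $F$, which is exactly what the centralizer count exploits; for a general $A\cong M_{r}(D)$ one must first produce, from $L$, a totally positive subfield of $D$ strictly containing $F$ before the count applies. The rest — that $\operatorname{pind}$ of the scalar extension is computed by $\operatorname{ind}(D\otimes_F L_{\rm py})$, that index is non-increasing under field extension, and that total positivity of $L/F$ is precisely the hypothesis needed to invoke Becher's conjecture — is routine bookkeeping.
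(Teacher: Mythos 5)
Your argument is essentially the paper's own proof: apply the Double Centralizer Theorem to the totally positive subfield $L$ inside the division algebra $D$, observe that $\operatorname{ind}(D\otimes_F L)=\operatorname{ind}(C_D(L))\leq \deg D/[L:F]<\operatorname{ind}(D)$, and use $F_{\rm py}=F$ together with the non-increase of index under scalar extension to get $\operatorname{pind}(D\otimes_F L)<\operatorname{pind}(D)$, contradicting Conjecture~\ref{Becher's conjecture}. The reduction step you flag as the main obstacle is not treated in the paper either: its proof simply starts with a central division algebra and a subfield $K\hookrightarrow D$, so your write-up matches the paper's scope and method.
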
  
	
	\begin{proof}
		Let $D$ be a central division algebra of exponent $2$, and let $K/F$ be a field extension of degree at least $2$. Moreover, assume that $K/F$ is totally positive and that $K \hookrightarrow D$. By the Double Centralizer Theorem, we have  
		\[
		[D : F] = [K : F] \cdot [C_D(K) : F],
		\]  
		which implies that  
		\[
		[C_D(K) : K] = \left(\frac{\operatorname{Ind}(D)}{[K : F]}\right)^2.
		\]  
		
		Let $\Delta$ be the underlying division algebra of $C_D(K)$. Then, for some $m$, we have  
		\[
		[C_D(K) : K] = (\operatorname{Ind}(\Delta))^2 m^2.
		\]  
		Therefore, $\operatorname{Ind}(\Delta)$ divides  
		$
		\frac{\operatorname{Ind}(D)}{[K : F]},
		$
		which implies that  
		\[
		\operatorname{Ind}(\Delta) \leq \frac{\operatorname{Ind}(D)}{[K : F]} < \operatorname{Ind}(D).
		\]  
		
		Since $\operatorname{Ind}(D \otimes_F K) = \operatorname{Ind}(C_D(K)) = \operatorname{Ind}(\Delta)$, we obtain  
		\[
		\operatorname{pind}(D \otimes_F K) < \operatorname{pind}(D) = \operatorname{Ind}(D),
		\]  
		which contradicts the assumption that Becher's conjecture holds.  
		
	\end{proof}
	\begin{proposition}\label{becher conjecture particular case}
		Let $F$ be a formally real field, and let $K/F$ be a Galois totally positive field extension. Suppose that $D$ is a central division algebra of index $2^n$ and exponent $2$, containing a subfield $L \subset F_{\rm py}$ such that $[L : F] = 2^{n-2}$. Then, we have  
		\[
		\operatorname{pind}(D) = \operatorname{pind}(D \otimes_F K),
		\]  
		which shows that Becher's conjecture holds for such central division algebras.  
		
	\end{proposition}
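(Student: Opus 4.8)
The plan is to reduce, via the Double Centralizer Theorem, to a question about an index-$4$ exponent-$2$ algebra over $L$, to compare Pythagorean indices by realizing everything inside $K_{\rm py}$, and then to rule out any drop of index by showing that $K_{\rm py}/F_{\rm py}$ is again totally positive while $F_{\rm py}$ is Pythagorean. First I would apply the Double Centralizer Theorem to $L\subset D$: one gets $[C_D(L):L]=(\operatorname{Ind}(D)/[L:F])^2=16$, and since the centralizer of a subfield of a division algebra is again a division algebra, $C_D(L)$ is a central division $L$-algebra of index $4$; as $[C_D(L)]=[D\otimes_FL]$ in $\operatorname{Br}(L)$ and $\exp(D)=2$, the class of $C_D(L)$ has order dividing $2$, and it is nonzero since a degree-$4$ division algebra is non-split, so $\exp(C_D(L))=2$. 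Fixing compatible algebraic closures so that $L\subseteq F_{\rm py}\subseteq K_{\rm py}$, the identity $[D\otimes_FL]=[C_D(L)]$ gives
\[
\operatorname{pind}(D)=\operatorname{Ind}(D\otimes_FF_{\rm py})=\operatorname{Ind}(C_D(L)\otimes_LF_{\rm py}),\qquad
\operatorname{pind}(D\otimes_FK)=\operatorname{Ind}(D\otimes_FK_{\rm py})=\operatorname{Ind}(C_D(L)\otimes_LK_{\rm py}),
\]
and since $F_{\rm py}\subseteq K_{\rm py}$ the second index divides the first. So it remains to show that the index of $C_D(L)$ does not strictly decrease upon extending scalars from $F_{\rm py}$ to $K_{\rm py}$.

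The crux is to prove that $K_{\rm py}/F_{\rm py}$ is totally positive, which I would do in three moves. A Pythagorean closure is always totally positive: by Lemma~\ref{equivalent characterization of totally positive field extensions} it suffices to handle one square root at a time, and $E(\sqrt{s})/E$ with $s\in\sum E^2$ is totally positive because a form isotropic over $E(\sqrt{s})$ but anisotropic over $E$ contains a subform $\langle a\rangle\langle 1,-s\rangle$, which is weakly isotropic over $E$ (as $m\langle 1\rangle\perp\langle -s\rangle$ is isotropic when $s$ is a sum of $m$ squares), and then one iterates and passes to the union. In particular $(KF_{\rm py})_{\rm py}/KF_{\rm py}$ is totally positive. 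Next, $KF_{\rm py}/F_{\rm py}$ is Galois, being obtained from the Galois extension $K/F$, and it is totally real: for $Q\in X_{F_{\rm py}}$ the real closure $(F_{\rm py})_Q$ is algebraic over $F$ and real closed, hence a real closure of $F$, into which $K$ embeds over $F$ because $K/F$ is totally positive and algebraic; composing, $KF_{\rm py}$ embeds into $(F_{\rm py})_Q$. By the criterion used in the proof of Proposition~\ref{finite Galois tp} (\cite[3.9]{BLS} and \cite[VIII.2.15]{Lam}), a Galois totally real extension is totally positive, so $KF_{\rm py}/F_{\rm py}$ is totally positive. Finally $K_{\rm py}=(KF_{\rm py})_{\rm py}$ and total positivity is transitive along $F_{\rm py}\subseteq KF_{\rm py}\subseteq K_{\rm py}$, so $K_{\rm py}/F_{\rm py}$ is totally positive.

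To finish I would split into cases according to $e:=\operatorname{Ind}(C_D(L)\otimes_LF_{\rm py})\in\{1,2,4\}$. The case $e=1$ is trivial. If $e=2$, then $C_D(L)\otimes_LF_{\rm py}$ is Brauer-equivalent to a quaternion division algebra over $F_{\rm py}$ whose norm form $\psi$ is anisotropic over $F_{\rm py}$; a splitting of $C_D(L)\otimes_LK_{\rm py}$ would make $\psi$ isotropic over $K_{\rm py}$, hence weakly isotropic over $F_{\rm py}$ by the previous step together with Lemma~\ref{equivalent characterization of totally positive field extensions}, hence isotropic over $F_{\rm py}$ because $F_{\rm py}$ is Pythagorean --- a contradiction. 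If $e=4$, then $C_D(L)\otimes_LF_{\rm py}$ is a biquaternion division algebra over $F_{\rm py}$ and its $6$-dimensional Albert form $\psi$ is anisotropic over $F_{\rm py}$; by Albert's theorem on biquaternion algebras, $\operatorname{Ind}(C_D(L)\otimes_LK_{\rm py})<4$ would force $\psi$ to become isotropic over $K_{\rm py}$, and the same reasoning yields a contradiction. Hence $\operatorname{Ind}(C_D(L)\otimes_LK_{\rm py})=e$, that is $\operatorname{pind}(D\otimes_FK)=\operatorname{pind}(D)$.

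The step I expect to be the main obstacle is the middle one: transporting the total positivity of $K/F$ to the extension $K_{\rm py}/F_{\rm py}$ of Pythagorean closures (this is also where the Galois hypothesis on $K/F$ is genuinely used, through the implication ``Galois and totally real $\Rightarrow$ totally positive''). Once that is in hand, the remainder is the routine Double Centralizer bookkeeping together with the standard structure theory of index-$\le 4$ exponent-$2$ algebras --- namely that such an algebra is controlled by a single quadratic form, the quaternion norm form or the Albert form, whose isotropy over a Pythagorean field coincides with weak isotropy.
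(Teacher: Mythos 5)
Your proposal is correct, and its first half coincides with the paper's proof: both apply the Double Centralizer Theorem to $L\subset D$ to replace $D$ by the degree-$4$, exponent-$2$ division algebra $C_D(L)\sim D\otimes_FL$, and both exploit $L\subset F_{\rm py}$ so that only the indices over $F_{\rm py}$ and $K_{\rm py}$ matter. Where you diverge is in how the index is prevented from dropping: the paper passes to the compositum $KL$, notes (via the argument of Proposition \ref{finite Galois tp}) that $KL/L$ is Galois and totally positive, and then simply cites \cite[Theorem 1.5]{MPS} to get $\operatorname{pind}(C_D(L))=\operatorname{pind}(C_D(L)\otimes_LKL)$, whereas you in effect reprove that theorem in the needed degree-$4$ case: you show $K_{\rm py}/F_{\rm py}$ is totally positive (Pythagorean closures are totally positive, $KF_{\rm py}/F_{\rm py}$ is Galois and totally real hence totally positive by the same \cite{BLS}/\cite{Lam} criterion the paper uses, then transitivity), and you conclude via the quaternion norm form and the Albert form together with the fact that over the Pythagorean field $F_{\rm py}$ weak isotropy implies isotropy. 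Your route is longer but self-contained and makes visible exactly where the Galois hypothesis and the Pythagorean property enter; the paper's is shorter but opaque at the key step, since it delegates it to an external result (and is itself a bit breezy in invoking the ``proof of'' Proposition \ref{finite Galois tp} for the compositum $KL$ with $L\not\subseteq K$). Two small points to tighten in your write-up: when you embed $K$ into the real closure $(F_{\rm py})_Q$ you should say explicitly that normality of $K/F$ forces the image to be $K$ itself inside the fixed algebraic closure, so that the compositum $KF_{\rm py}$ really sits inside $(F_{\rm py})_Q$ and the ordering $Q$ extends to it; and you should note that an ordering of $F$ extends to an ordering (not merely a semiordering) of $K$ because a form with $P$-positive coefficients that became isotropic over $K$ would be weakly isotropic, hence isotropic after scaling by a multiple, over $F$, contradicting definiteness. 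Neither point is a gap in substance.
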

	\begin{proof}
		By the Double Centralizer Theorem, we have  
		\[
		[D : F] = [L : F] \cdot [C_D(L) : F],
		\]  
		which gives  
		\[
		2^{2n} = 2^{2(n-2)} \cdot [C_D(L) : L].
		\]  
		Hence, it follows that  
		\[
		[C_D(L) : L] = 2^4.
		\]  
		Since $L \subset F_{\rm py}$, we have $L_{\rm py} = F_{\rm py}$ and $(KL)_{\rm py} = K_{\rm py}$. By the proof of Lemma \ref{finite Galois tp}, the extension $KL/L$ is Galois and totally positive. By \cite[Theorem 1.5]{MPS}, it follows that  
		\[
		\operatorname{pind}(C_D(L)) = \operatorname{pind}(C_D(L) \otimes_L KL).
		\]  
		Moreover, since  
		\[
		\operatorname{pind}(D) = \operatorname{pind}(C_D(L))
		\]  
		and  
		\[
		\operatorname{pind}(D \otimes_F KL) = \operatorname{pind}(C_D(L) \otimes_L KL),
		\]  
		it follows that  
		\[
		\operatorname{pind}(D) = \operatorname{pind}(D \otimes_F KL).
		\]  
		Thus, we obtain the desired result.

	\end{proof}

	\bibliography{mybib}{}
	\bibliographystyle{alpha}
\end{document}